\DeclareMathOperator{\tb}{tb}
\DeclareMathOperator{\rot}{rot}
\DeclareMathOperator{\lk}{lk}
\DeclareMathOperator{\de}{d}
\newcommand{\Z}{\mathbb{Z}}
\newcommand{\Q}{\mathbb{Q}}
\newcommand{\xist}{\xi_{\mathrm{st}}}
\newtheoremstyle{thm}{}{}{\itshape}{}{\bfseries}{\hfill\\}{ }{} 
\newtheoremstyle{definition}{}{}{}{}{\bfseries}{\hfill\\}{ }{} 
\theoremstyle{thm}
\newtheorem{Theorem}{Theorem}[section]
\newtheorem{thm}[Theorem]{Theorem}
\newtheorem{lem}[Theorem]{Lemma}
\newtheorem*{Theorem-ohne}{Theorem}
\newtheorem{con}[Theorem]{Conjecture}
\newtheorem{prob}[Theorem]{Problem}
\theoremstyle{definition}
\newtheorem{defi}[Theorem]{Definition}
\newtheorem{rem}[Theorem]{Remark}
\newtheorem{ex}[Theorem]{Example}
\begin{document}


\title[The Legendrian knot complement problem]{The Legendrian knot complement problem} 

\author{Marc Kegel}

\address{Institut f\"ur Mathematik, Humboldt-Universit\"at zu Berlin, Unter den Linden 6, 10099 Berlin, Germany}
\email{kegemarc@math.hu-berlin.de}


\begin{abstract}
We prove that every Legendrian knot in the tight contact structure of the $3$-sphere is determined by the contactomorphism type of its exterior. Moreover, by giving counterexamples we show this to be not true for Legendrian links in the tight $3$-sphere. On the way a new user-friendly formula for computing the Thurston--Bennequin invariant of a Legendrian knot in a surgery diagram is given.
\end{abstract}

\date{\today} 

\keywords{Legendrian knots, knot complement problem, contact surgery, cosmetic surgery} 

\subjclass[2010]{53D35; 53D10, 57M25, 57M27, 57R17, 57R65} 

\thanks{The research of the author is supported by the Berlin Mathematical School}

\maketitle


\section{Introduction}

The \textit{knot complement problem}, first proposed in 1908 by Heinrich Tietze~\cite[Section~15]{T}, asks when a knot in a given $3$-manifold is determined by its complement. It had been open for eighty years until in 1989 Gordon and Luecke~\cite{GL} proved it to be true for every knot in $S^3$. In fact, they proved that non-trivial Dehn surgery along a non-trivial knot in $S^3$ cannot yield $S^3$ again. As a direct consequence, they obtain that every (tame) knot in $S^3$ is determined by its complement, while this is in general not true for links in $S^3$ and for knots in general manifolds. In Sections~\ref{section:complement} and~\ref{section:surgery} we recall the basic facts about the knot complement problem and Dehn surgery.

Here we want to consider the same problem for \textit{Legendrian knots} in \textit{contact $3$-manifolds}. The main result is a generalization of the result by Gordon and Luecke for Legendrian knots in $S^3$ with its standard tight contact structure $\xist$, roughly speaking it says the following (see Theorem~\ref{thm:contact2} for the precise statement).

\begin{thm} [Contact Dehn surgery theorem] \label{main:main0} 
If the result of a non-trivial rationally contact Dehn surgery along some Legendrian knot $K$ in $(S^3,\xist)$ yields again $(S^3,\xist)$ then $K$ has to be a Legendrian unknot whose Thurston--Bennequin invariant $\tb(K)$ and rotation number $\rot(K)$ are related by 
\begin{equation*}
\vert\tb(K)\vert=\vert\rot(K)\vert+1.
\end{equation*}
\end{thm}

Notice that a Legendrian unknot with classical invariants related by $\vert\tb(K)\vert=\vert\rot(K)\vert+1$ is obtained from the unique Legendrian unknot with $\tb=-1$ by a sequence of stabilizations all with the same sign. Very similar to the topological case it follows from Theorem~\ref{main:main0} that a Legendrian knot in $(S^3,\xist)$ is determined by the contactomorphism type of its exterior (i.e.\ the complement of an open \textit{standard neighborhood }of the Legendrian knot). For the precise statement see Theorem~\ref{thm:contact1}.

\begin{thm} [Legendrian knot exterior theorem] \label{main:main1} 
Two Legendrian knots in $(S^3,\xist)$ are isotopic (as Legendrian knots) if and only if their exteriors are contactomorphic.
\end{thm}

This result implies that all invariants of Legendrian knots are actually invariants of the contactomorphism type of its exterior. It remains unclear if Theorem~\ref{main:main1} holds also for the complements instead of the exteriors (see discussion after  Theorem~\ref{thm:contact1}). The Legendrian knot complement problem was also mentioned in~\cite{Et2}. 

With similar ideas as in the present article, one can obtain similar results as Theorem~\ref{main:main0} and~\ref{main:main1} also for transverse knots in $(S^3,\xist)$~\cite{Ke17b}.

In Sections~\ref{section:contactcomplement} and~\ref{section:contactsurgery} we recall the definition of \textit{contact Dehn surgery} and explain how to deduce Theorem~\ref{main:main1} from Theorem~\ref{main:main0}. 

One way of proving Theorem~\ref{main:main0} is to consider a Legendrian knot $L$ in the exterior of the surgery unknot $K$ stabilized in both ways. Then this Legendrian knot $L$ represents also a Legendrian knot in the surgered manifold. By showing that the Thurston--Bennequin invariant of $L$ in the surgered contact manifold violates Bennequin-type inequalities (which hold only true in tight contact manifolds) we can deduce that the surgered contact manifold has to be overtwisted and consequently cannot be $\xist$.

Formulas for computing the classical invariants $\tb$ and $\rot$ in contact surgery diagrams are given in~\cite{LOSS,GO,C}. However, these formulas only work for contact $(\pm1)$-surgeries and since we are concerned with a general contact $r$-surgery these formulas cannot be used here. Therefore, we present in Section~\ref{section:LOSS} a new formula to compute the Thurston--Bennequin invariant of a Legendrian knot presented in a general rationally contact $r$-surgery diagram along Legendrian knots. Moreover, our formula works also in general contact manifolds, for contact surgeries along non-Legendrian knots and also simplifies the computations in contact $(\pm1)$-surgery diagrams.

Building up on this and using the formulas in~\cite{LOSS,GO,C} one can also get formulas for computing rotation numbers of Legendrian knots, self-linking numbers of transverse knots and the $\de_3$-invariant of the resulting contact manifold in contact $(1/n)$-surgery diagrams along Legendrian knots~\cite{DK}. In~\cite{DKK,DK2} similar formulas are given for computing the classical invariants of Legendrian knots sitting on the page of a contact open book. 

Moreover, in Section~\ref{counterexample} we explain how to do a crossing change in a Legendrian knot diagram with help of a contact Dehn surgery (a so-called \textit{contact Rolfsen twist}). With that, it is easy to construct counterexamples to the Legendrian link exterior problem in the tight contact structure of $S^3$, i.e. two non-equivalent Legendrian links with contactomorphic exteriors. 

Finally, in Section~\ref{topgenmfd} and~\ref{section:other} we give a short discussion about the Legendrian knot exterior problem in other manifolds. It turns out that in the topological setting the knot complement problem in a general manifold is equivalent to the non-existence of an \textit{exotic cosmetic Dehn surgery} resulting in this manifold. In the contact setting, it is not clear if this equivalence is true. In Section~\ref{section:other} we give a short discussion on this topic and present examples of \textit{exotic cosmetic contact Dehn surgeries}.



\subsubsection*{Acknowledgment.} This work is part of my Ph.D.\ thesis~\cite{Ke17a} which was partially supported by the DFG Graduiertenkolleg 1269 "Global Structures in Geometry and Analysis". I would like to thank Sebastian Durst, Hansj\"org Geiges, Mirko Klukas, Sinem Onaran, and the referee for reading earlier versions of this article and for many useful discussions and suggestions.


\section{The Knot Complement Problem}
	\label{section:complement}
	
All links are assumed to be tame and considered up to (coarse) equivalence.
\begin{defi}[Coarse equivalence] \label{def:coarseequiv}
Let $L_1$ and $L_2$ be two links in an oriented closed $3$-manifold $M$. Then $L_1$ is \textbf{(coarse) equivalent} to $L_2$, if there exists a homeomorphism $f$ of $M$
\begin{align*}
	f\colon M\longrightarrow M\\
	L_1 \longmapsto L_2
\end{align*}
that maps $L_1$ to $L_2$. Then we write $L_1 \sim L_2$.
\end{defi}

\begin{rem}[Coarse equivalence vs. oriented coarse equivalence vs. isotopy]\label{rem:coarseequiv}
The (coarse) equivalence is a weaker condition than the equivalence of knots up to isotopy. For example, there is a reflection of $S^3$ that maps the left-handed trefoil to the right-handed trefoil, so these two knots are (coarse) equivalent, but one can show that the left-handed trefoil is not isotopic to the right-handed trefoil (i.e. there is no such homeomorphism isotopic to the identity). 

One also can consider the \textbf{oriented (coarse) equivalence}, that means equivalence where only orientation-preserving homeomorphisms of $M$ are allowed. In $S^3$ oriented equivalence is equivalent to isotopy (because in $S^3$ every orientation preserving homeomorphism is isotopic to the identity) but in general manifolds this is a weaker condition than isotopy.
\end{rem}

A first observation is, that if two links $L_1$ and $L_2$ are equivalent, then their complements are homeomorphic. The following question is called the \textbf{link complement problem} (or for one component links, the \textbf{knot complement problem}): 

\begin{prob}[Link complement problem]
Are two links in the same manifold with homeomorphic complements equivalent?
\end{prob}

Link complements are non-compact, but often it is much easier to work with compact manifolds. Therefore, pick some regular (closed) neighborhood $\nu L$ of a link $L$ in $M$ and call the complement $M\setminus\mathring{\nu L}$ of the interior $\mathring{\nu L}$ of this neighborhood the \textbf{exterior} of $L$. 

The corresponding problem whether the equivalence class of a link is determined by the homeomorphism type of its exterior is called the \textbf{link exterior problem}. (Actually, this was the problem asked by Tietze in~\cite[Section 15]{T}.) By work of Edwards~\cite[Theorem 3]{E} these two problems are equivalent (compare also the discussion after Theorem~\ref{thm:contact1}).

\begin{ex}[The Whitehead links]\label{ex:Whitehead}
The first counterexample was given in 1937 by Whitehead~\cite{W}. He considered the following two links $L_1$ and $L_2$ in $S^3$, now called \textbf{Whitehead links}, see Figure~\ref{fig:whitehead}.

If one deletes one component out of $L_1$ then the remaining knot is an unknot. But if one deletes the unknot $U$ out of $L_2$ then the remaining knot is a trefoil. So $L_1$ cannot be equivalent to $L_2$. But the exteriors of these links are homeomorphic, as one can see as follows: Consider the exterior $S^3\setminus\mathring{ \nu U}$ of the unknot $U$. Then cut open this $3$-manifold along the Seifert disk of the unknot $U$, make a full $2\pi$-twist and re-glue the two disks together again. This is called a \textbf{Rolfsen twist} and describes a homeomorphism of the link exteriors.

By twisting several times along $U$ one can even get infinitely many non-equivalent links all with homeomorphic exteriors.
\end{ex}

\begin{figure}[htbp] 
\centering
\def\svgwidth{0,9\columnwidth}
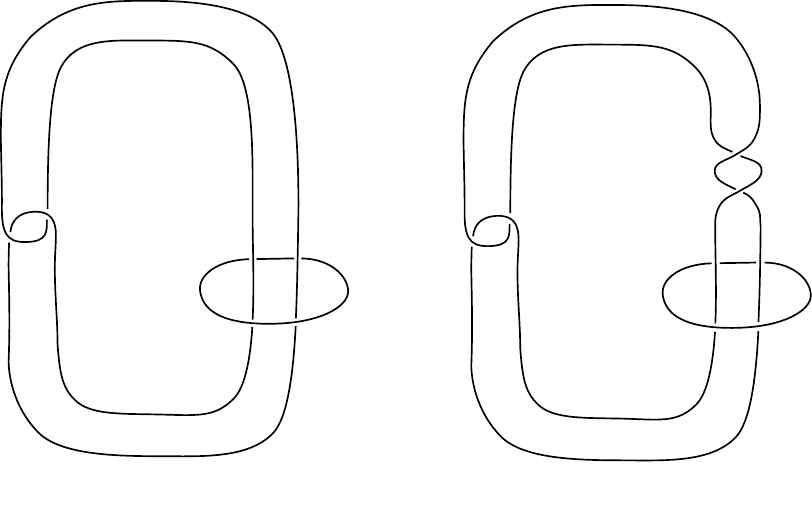
\caption{Two non-equivalent links with homeomorphic complements}
\label{fig:whitehead}
\end{figure}	
The next natural question would be to ask if this holds on the level of knots. This is the so-called knot complement theorem by Gordon-Luecke~\cite{GL}.

\begin{thm} [Knot complement theorem by Gordon-Luecke] \label{thm:gordon-luecke1} 
Let $K_1$ and $K_2$ be two knots in $S^3$ with homeomorphic complements, then $K_1$ is equivalent to $K_2$.
\end{thm}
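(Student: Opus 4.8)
The plan is to follow the standard reduction of the Knot Complement Problem to a statement about Dehn surgery, and then to invoke the combinatorial heart of Gordon--Luecke's argument as a black box. First I would pass from complements to exteriors (legitimate by Edwards' theorem quoted above), so that a homeomorphism of complements yields a homeomorphism $h\co S^3\setminus\mathring{\nu K_1}\to S^3\setminus\mathring{\nu K_2}$ of the exteriors. Reconstructing $S^3$ from each exterior by gluing back a solid torus along the meridian, I would then track where $h$ sends the meridian $\mu_2$ of $K_2$: it is carried to some slope $\gamma$ on $\partial\nu K_1$, and by naturality of $H_1$ (both exteriors have first homology $\Z$, generated by the meridian, with the longitude nullhomologous) one obtains $\gamma=\pm\mu_1+b\,\lambda_1$ for some $b\in\Z$. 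In particular, Dehn surgery on $K_1$ along the slope $\gamma$ recovers $S^3$, which is precisely the promised reformulation in terms of surgery.

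Now there are two cases. If $b=0$, then $\gamma=\mu_1$ as unoriented curves, so $h$ matches the two meridional fillings and therefore extends to a homeomorphism $\bar h\co S^3\to S^3$ with $\bar h(\nu K_1)=\nu K_2$; passing to cores gives $\bar h(K_1)=K_2$, so $K_1\sim K_2$ and we are done. If $b\neq 0$, then we have produced a \emph{nontrivial} Dehn surgery along the knot $K_1\subset S^3$ whose result is again $S^3$. At this point I would quote the central theorem of \cite{GL}: a nontrivial Dehn surgery on a nontrivial knot in $S^3$ never yields $S^3$. Hence $K_1$ must be the unknot, whence $S^3\setminus\mathring{\nu K_1}$ — and therefore also $S^3\setminus\mathring{\nu K_2}$ — is a solid torus, so $K_2$ is unknotted as well; since any two unknots in $S^3$ are equivalent, $K_1\sim K_2$ again.

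The genuine obstacle, of course, is the surgery statement just quoted, which is the actual content of Gordon--Luecke's work and which I would not attempt to reprove here. Its proof begins from the two planar surfaces dual to the two fillings — the meridian disk of $\nu K_1$ inside $S^3=(S^3\setminus\mathring{\nu K_1})_{\mu_1}$ and the meridian disk of the surgery solid torus inside $S^3=(S^3\setminus\mathring{\nu K_1})_{\gamma}$ — made transverse and cleaned up by innermost-disk surgeries so that they intersect only in arcs. These arcs organize into labelled intersection graphs on the two capped-off spheres, and a delicate combinatorial analysis (parity rules, Scharlemann cycles, and Euler-characteristic estimates) forces the geometric intersection number $\Delta(\mu_1,\gamma)$ to be so small that $S^3$ cannot arise unless $\gamma=\mu_1$ or $K_1$ is unknotted. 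This is exactly the step I expect to be hard; for the present purposes I treat it as an input, consistent with the announced plan of merely recalling the basic facts about the Knot Complement Problem and Dehn surgery in the first sections (see Section~\ref{section:complement}) before turning to the Legendrian setting.
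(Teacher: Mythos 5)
Your proposal is correct and follows essentially the same route as the paper: reduce to Dehn surgery by regluing a solid torus along the image of the meridian under the exterior homeomorphism, invoke the Gordon--Luecke surgery theorem as a black box, and conclude directly in the trivial-slope case that the homeomorphism extends and carries core to core. The only (harmless) divergence is in the nontrivial-slope case, where you use the classical fact that a knot in $S^3$ whose exterior is a solid torus is unknotted, whereas the paper instead applies the surgery theorem a second time with the roles of $K_1$ and $K_2$ reversed to get $K_1\sim U\sim K_2$.
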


A starting point for proving this theorem was to translate it into a problem concerning Dehn surgery.


	\section{Dehn Surgery}
	\label{section:surgery}
In this section, we recall the definition of Dehn surgery, which is a very effective construction method for $3$-manifolds (for more information see~\cite[Chapter~VI]{PS} or~\cite[Chapter~9]{R}). Roughly speaking one cuts out the neighborhood of a knot and glues a solid torus back in a different way to obtain a new $3$-manifold. More precisely:

\begin{defi}[Dehn surgery]  
Let $K$ be a knot in a closed oriented $3$-manifold $M$. Take a non-trivial simple closed curve $r$ on $\partial (\nu K)$ and a homeomorphism $\varphi$, such that
\[\begin{array}{ccc}
\varphi\colon \partial(S^1\times D^2)&\longrightarrow& \partial (\nu K)\\
\mu_0	:= \{\text{pt}\}\times\partial D^2  &\longmapsto& r.
\end{array}\]
Then define
\[\begin{array}{rccccl}
M_k(r)&:=& S^1\times D^2 &+& M\setminus\mathring{\nu K}&\big/_\sim ,\\
&&\partial(S^1 \times D^2)\ni p&\sim& \varphi(p)\in \partial(\nu K).&
\end{array}\]
One says that $M_K(r)$ is obtained out of $M$ by \textbf{Dehn surgery} along $K$ with \textbf{slope}~$r$.
\end{defi}

One can easily show that $M_K(r)$ is again a $3$-manifold independent of the choice of $\varphi$ (see~\cite[Chapter~9.F]{R}). So to specify $M_K(r)$ one only has to describe the knot $K$ (in the $3$-manifold $M$) together with the slope~$r$. To do this effectively one observes that there are two special kinds of curves on $\partial (\nu K)$:
\begin{itemize}
	\item The \textbf{meridian} $\mu$: A simple closed curve on $\partial(\nu K)$, that is non-trivial on $\partial(\nu K)$, but trivial in $\nu K$.
	\item The \textbf{longitudes} $\lambda$: Simple closed curves on $\partial(\nu K)$, that are non-trivial on $\partial(\nu K)$ and intersect $\mu$ transversely exactly once.
\end{itemize}
The curves shall be oriented in such a way, that the pair $(\mu,\lambda)$ represents the positive orientation of $\partial(\nu K)$ in $M$. One can show that the meridian $\mu$ is up to isotopy uniquely determined. But for the longitudes, there are different choices. For a given longitude $\lambda$ there are infinitely many other longitudes given by $\tilde{\lambda}=\lambda + q\mu $, for $q\in\Z$. Given such a longitude $\lambda$ one can write $r$ uniquely as
\begin{equation*}
r=p\mu+q\lambda, \,\text{for} \, p,  q \,\text{coprime},
\end{equation*}
where we regard $r$, $\mu$ and $\lambda$ as homology classes in $H_1(\partial(\nu K);\Z)$. 

For nullhomologous knots $K$ (i.e.\ knots that bounds a compact so-called \textbf{Seifert surface}) there is a preferred longitude, the so-called \textbf{surface longitude} $\lambda_s$, obtained from $K$ by pushing it into the direction of some Seifert surface. If we express the slope $r$ with respect to the surface longitude $\lambda_s$, i.e. $r=p\mu+q\lambda_s$, then the rational number $p/{q}\in\Q\cup\{\infty\}$ is called the \textbf{(topological) surgery coefficient}. And in fact, it is easy to show that for a slope $r=p\mu+q\lambda$ (and a given longitude $\lambda$) the surgered manifold $M_K(r)$ is already determined by the rational number $p/q\in\Q\cup\{\infty\}$ (see for example~\cite[Section~9.G]{R}).  From now on, depending on the context, we will denote by $r$ the slope or the corresponding surgery coefficient.

\begin{ex}[Surgeries along the unknot]\label{ex:surgeryUnknot}
We want to describe surgeries along the unknot $U$ in $S^3$. Observe that the exterior $S^3\setminus \mathring{\nu U}$ is again homeomorphic to a solid torus $S^1\times D^2$, this corresponds to the trivial genus-$1$ Heegaard splitting of $S^3$ (see for example~\cite[Example~8.5.]{PS}). Write
\[\begin{array}{ccc}
T_1:=&\nu U&\cong S^1\times D^2,\\
T_2:=&S^3\setminus \mathring{\nu U}&\cong S^1\times D^2.
\end{array}\]
Then the surface longitude $\lambda_1$ of $T_1$ is the meridian $\mu_2$ of $T_2$ and we choose the longitude $\lambda_2$ of $T_2$ to be the meridian $\mu_1$ of $T_1$ (see for example~\cite[Figure 8.7.]{PS}). So for surgeries along the unknot in $S^3$ one can write the slope $r$ uniquely as $r=p\mu_1+q\lambda_1$.

Now we want to show that $S^3_U(\mu_1+q\lambda_1)$ is homeomorphic to $S^3$. Therefore, one first considers the so-called \textbf{trivial Dehn surgery} $S^3_U(\mu_1)$, where one cuts out a neighborhood of the knot and glues it back in the same way as before. So the manifold is not changing, and in this case, it is again $S^3$. Then the idea is to do again a Rolfsen twist along the unknot to obtain a homeomorphism from $S^3\cong S^3_U(\mu_1)$ to $S^3_K(\mu_1+q\lambda_1)$. Therefore, consider the diagram that will be specified in the following.\\
\hfill\\
\begin{xy}
(5,50)*+{S^3}="a";(11,50)*+{\cong}="a1"; (20,50)*+{S^3_U(\mu_1)}="b"; (32,50)*+{:=}="b1"; (45,50)*+{S^1\times D^2}="c"; (75,50)*+{+}="d"; (102,50)*+{T_2}="e";(115,50)*+{\big/_\sim}="e1";%
(47,45)*+{\mu_0}="f"; (100,45)*+{\mu_1}="g";%
(47,40)*+{\lambda_0}="h"; (100,40)*+{\lambda_1}="i";%
(47,10)*+{\lambda_0}="j"; (100,10)*+{\lambda_1}="k";%
(75,25)*+{\circlearrowright}="k1";%
(47,5)*+{\mu_0}="l"; (95,5)*+{\mu_1+q\lambda_1}="m";%
(20,0)*+{S^3_U(\mu_1+q\lambda_1)}="n";(34,0)*+{:=}="n1"; (45,0)*+{S^1\times D^2}="o"; (75,0)*+{+}="p"; (102,0)*+{T_2}="q";(115,0)*+{\big/_\sim}="q1";%
{\ar@{-->}_{\cong} "b";"n"};{\ar@{->}@/_1pc/ _{\operatorname{Id}} "c";"o"};{\ar@{->}@/^1pc/^h "e";"q"};%
{\ar@{|->}_{\varphi_1} "f";"g"};{\ar@{|->} "h";"i"};{\ar@{|->}_{\varphi_2} "j";"k"};{\ar@{|->} "l";"m"};
\end{xy}\\
\hfill\\
First one chooses the gluing maps $\varphi_i$ such that they map the meridians $\mu_0$ as determined by the slope. By this, the manifolds are fixed, but the maps $\varphi_i$ are not. There are many possibilities to what a longitude $\lambda_0$ can map, but the homeomorphism type of the resulting manifold is not affected by this. In this example, one can choose the maps $\varphi_i$ such that they map $\lambda_0$ to $\lambda_1$. 

To construct a homeomorphism between the two resulting manifolds, one uses on the $S^1\times D^2$-factor the identity map. If one finds a homeomorphism $h$ of the $T_2$-factor such that the diagram commutes, then these two maps fit together to a homeomorphism of the whole manifolds. For the map $h$ one can choose a $q$-fold Dehn-twist of the solid torus $T_2$, i.e.
\[\begin{array}{rrcl}
h\colon& T_2&\longrightarrow& T_2\\
	&\mu_1=\lambda_2 &\longmapsto& \mu_1+q\lambda_1=\lambda_2+q\mu_2\\
	&\lambda_1=\mu_2 & \longmapsto &\lambda_1=\mu_2.
\end{array}\]
So the diagram gives rise to a homeomorphism between the two manifolds.
\end{ex}

\begin{rem}[The homology of the surgered manifolds]\label{rem:hom}
For all other surgeries along the unknot, one computes the homology as 
\begin{equation*}
H_1\big(S^3_U(p\mu_1+q\lambda_1);\mathbb{Z}\big)=\Z_p.
\end{equation*}
So the surgeries from the above example are the only surgeries along the unknot that lead again to $S^3$. In fact, these are the only non-trivial surgeries along an arbitrary knot in $S^3$ that yield again $S^3$, as the following deep theorem shows.
\end{rem}

\begin{thm} [Surgery theorem by Gordon-Luecke~\cite{GL}]\label{thm:gordon-luecke2}
Let $K$ be a knot in $S^3$. If $S^3_K(r)$ is homeomorphic to $S^3$ for some $r\neq \mu$, then $K$ is equivalent to the unknot $U$.
\end{thm}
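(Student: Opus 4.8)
The plan is to argue by contradiction. Assume $K$ is a nontrivial knot in $S^3$ with $S^3_K(r)\cong S^3$ for some slope $r\neq\mu$, and seek a contradiction. First I would pass to the \emph{surgery-dual} knot $K'$, the core of the solid torus glued in during the surgery, regarded as a knot in $S^3_K(r)\cong S^3$. Deleting an open neighbourhood of $K'$ only peels a collar $T^2\times[0,1]$ off that solid torus, so the exterior of $K'$ is homeomorphic to $M:=S^3\setminus\mathring{\nu K}$; under this identification the meridian of $K'$ becomes the slope $r$ on $\partial M$, while the meridian of $K$ is $\mu$. If $K'$ were unknotted, then $M$ would be a solid torus and hence $K$ unknotted --- impossible --- so $K'$ is nontrivial too, and $M$ is the exterior of a nontrivial knot: irreducible, boundary-irreducible, and not a solid torus. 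Moreover $M$ carries \emph{two} distinct Dehn fillings yielding $S^3$, namely $M(\mu)$ and $M(r)$. Writing $r=p\mu+q\lambda$ with $\lambda$ the Seifert longitude (null-homologous in $M$), the homology computation of the preceding remark gives $H_1\big(S^3_K(r);\Z\big)\cong\Z_p$, so $|p|=1$, and hence $\mu$ and $r$ have geometric intersection number $\Delta(\mu,r)=|q|\ge 1$.

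At this point one might hope to finish with the Knot Complement Theorem (Theorem~\ref{thm:gordon-luecke1}): it gives $K\sim K'$, hence a self-homeomorphism $g$ of $M$ with $g(\mu)=r$, and then $g^n$ induces a homeomorphism $M(\mu)\to M\big(g^n(\mu)\big)$, so every $g^n(\mu)$ is again an $S^3$-filling slope of $M$. But this only shows that either $M$ admits infinitely many $S^3$-fillings or $g$ has finite order on the set of slopes; in either case one is left needing ``the exterior of a nontrivial knot admits a unique $S^3$-filling'', which is no weaker than Theorem~\ref{thm:gordon-luecke2} itself --- indeed the usual implication runs the other way, Theorem~\ref{thm:gordon-luecke2}$\,\Rightarrow\,$Theorem~\ref{thm:gordon-luecke1}. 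So the real input must come from the combinatorial topology of Dehn fillings, and I would follow Gordon--Luecke \cite{GL}.

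Concretely, I would study how two surfaces representing the fillings meet inside $M$. On the surgery side, put $K'$ into thin position in $S^3=M(r)$ and take a thick level $2$-sphere; it meets $M$ in an incompressible planar surface $Q$ with $\partial Q$ of slope $r$. On the other side, a minimal-genus Seifert surface of $K$ in $S^3=M(\mu)$ restricts to an incompressible, boundary-incompressible surface $P$ in $M$. After isotoping to minimise $|P\cap Q|$, one removes the closed curves of $P\cap Q$ using irreducibility and incompressibility, and the surviving arcs organise into a pair of dual, labelled intersection graphs $G_P\subset\widehat P$ and $G_Q\subset\widehat Q$ on the capped-off surfaces. The key point is that $S^3$ is irreducible and contains no lens-space summand, which forbids \emph{Scharlemann cycles}; together with parity constraints on the edge-labels and an Euler-characteristic/valence count for $G_P$ and $G_Q$, this is designed to force $P\cap Q$ to be inessential, after which a further thin-position argument yields a contradiction with the nontriviality of $K$ (for instance, by forcing $K'$, and hence $K$, to be unknotted). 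The hard part is unquestionably this last stage: the exhaustive combinatorial case analysis of the intersection graphs --- controlling Scharlemann cycles and disposing of the delicate low-complexity configurations --- is essentially the whole content of \cite{GL}, and I would expect virtually all of the work to live there; for that reason it seems best to take Theorem~\ref{thm:gordon-luecke2} as a cited black box rather than to reprove it here.
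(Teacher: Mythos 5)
Your proposal ends up exactly where the paper does: Theorem~\ref{thm:gordon-luecke2} is taken as the deep input and cited to \cite{GL}, which is precisely the paper's ``proof'', and your observation that one cannot instead derive it from Theorem~\ref{thm:gordon-luecke1} without circularity is correct, since the paper deduces the Complement Theorem from the Surgery Theorem and not conversely. Your sketch of the Gordon--Luecke machinery is harmless but slightly off in one detail (in \cite{GL} both surfaces are planar, coming from a thick level sphere of a thin presentation of $K$ in $S^3$ and from a sphere meeting the dual solid torus minimally in the surgered $S^3$, rather than one of them being a Seifert surface); since you defer to \cite{GL} as a black box anyway, this does not affect correctness.
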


Theorem~\ref{thm:gordon-luecke1} now follows easily from Theorem~\ref{thm:gordon-luecke2}. The connection is as follows. Assume first that the meridian $\mu$ of the knot $K$ is marked on the boundary of the knot exterior. Then the knot can be recovered easily because there is a unique way to glue in a solid torus by requiring that the meridian of the solid torus should map to the meridian of the knot $K$. (First use the Alexander trick in dimension $2$ to fill in a unique disk bounding the meridian. Then the boundary of the resulting object is a $2$-sphere. By again using the Alexander trick, this time in dimension $3$, there is a unique way to fill this $2$-sphere with a $3$-ball.) Then, one gets back the knot $K$ as the spine of the newly glued-in solid torus. In the language of Dehn surgery, this was nothing but a trivial Dehn surgery along the knot $K$ to get back $S^3$.

If the meridian is not given, then the question is of course how many different curves on the boundary of the knot exterior give back $S^3$ by doing Dehn surgery along $K$ with this slope. The surgery theorem says exactly that this is only possible for the unknot. To be more precise:

\begin{proof}[Proof of Theorem~\ref{thm:gordon-luecke1}]\hfill\\
Choose a homeomorphism 
\begin{equation*}
h\colon S^3\setminus\mathring{\nu K_1}\longrightarrow S^3\setminus\mathring{\nu K_2}
\end{equation*}
and write
\begin{equation*}
S^3 \cong S^3_{K_1}(\mu_1)=\, S^1\times D^2\, +\,   S^3\setminus\mathring{\nu K_1}\,\big/_\sim,
\end{equation*}
where the gluing map is $\varphi_1\colon\mu_0\mapsto\mu_1$. Then consider the surgery along $K_2$ with respect to the composition of maps
\[\begin{array}{ccccc}
\partial\big(S^1\times D^2\big)& \stackrel{\varphi_1}{\longrightarrow}&\partial \big(S^3\setminus\mathring{\nu K_1}\big)&\stackrel{h}{\longrightarrow} &\partial\big(S^3\setminus\mathring{\nu K_2}\big)\\
\mu_0&\longmapsto&\mu_1&\longmapsto& h(\mu_1):=r_2.
\end{array}\]
To determine the homeomorphism type of this new manifold $S^3_{K_2}(r_2)$ look at the following diagram:\\
\hfill\\
\begin{xy}
(10,40)*+{S^3}="a";(15,40)*+{\cong}="a1"; (25,40)*+{S^3_{K_1}  (\mu_1)}="b"; (35,40)*+{:=}="b1"; (45,40)*+{S^1\times D^2}="c"; (75,40)*+{+}="d"; (102,40)*+{S^3\setminus\mathring{\nu K_1}}="e";(115,40)*+{\big/_\sim}="e1";%
(47,35)*+{\mu_0}="f"; (100,35)*+{\mu_1}="g";%
(75,20)*+{\circlearrowright}="k1";%
(47,5)*+{\mu_0}="l"; (94,5)*+{r_2=h(\mu_1)}="m";%
(25,0)*+{S^3_{K_2}(r_2)}="n";(35,0)*+{:=}="n1"; (45,0)*+{S^1\times D^2}="o"; (75,0)*+{+}="p"; (102,0)*+{S^3\setminus\mathring{\nu K_2}}="q";(115,0)*+{\big/_\sim}="q1";%
{\ar@{-->}_{f} "b";"n"};{\ar@{->}@/_1pc/ _{\operatorname{Id}} "c";"o"};{\ar@{->}@/^1pc/^h "e";"q"};%
{\ar@{|->}_{\varphi_1} "f";"g"};{\ar@{|->}^{h\circ\varphi_1} "l";"m"};
\end{xy}\\
\hfill\\
With similar arguments as in Example~\ref{ex:surgeryUnknot} this induces a homeomorphism $f$ from $S^3\cong S^3_{K_1}  (\mu_1)$ to $S^3_{K_2}(r_2)$ and with Theorem~\ref{thm:gordon-luecke2} it follows that $r_2$ is equal to $\mu_2$ or $K_2$ is equivalent to the unknot $U$. 

If $r_2=\mu_2$, then the surgery $S^3_{K_2}(r_2)$ is the trivial surgery, so the spines
\begin{equation*}
 S^1\times \{0\}\subset S^1  \times D^2\subset S^3
\end{equation*}
of the new solid tori are equal to the knots $K_i$. Therefore, $f$ sends $K_1$ to $K_2$. 

In the other case ($K_2\sim U$) one does the same thing again but with $K_1$ and $K_2$ reversed, then it follows that $K_1\sim U\sim K_2$.
\end{proof}

\begin{rem}[Oriented knot complement theorem]
Exactly the same works also with orientations. Theorem~\ref{thm:gordon-luecke2} also holds for oriented homeomorphism from $S^3_K(r)$ to $S^3$ and oriented equivalence from $K$ to $U$. Then exactly the same proof as before shows that two knots with orientation preserving homeomorphic complements are orientation preserving equivalent. For $S^3$ this is the same as isotopic knots (see also Remark~\ref{rem:coarseequiv}).
\end{rem}


	\section{The Legendrian knot complement problem}
\label{section:contactcomplement}

Now we want to generalize this proof to the case of \textbf{Legendrian knots} in the unique tight contact structure $\xi_{st}$ on $S^3$, i.e. the tangent line to the knots lies always in the $2$-plane field given by the contact structure (see~\cite{G} for all basics about contact geometry and Legendrian knots). We want to consider Legendrian links up to (coarse) equivalence like in Definition~\ref{def:coarseequiv} and we only consider cooriented contact structures.

\begin{defi}[Coarse equivalence]  \label{defi:equi} 
Let $L_1$ and $L_2$ be two Legendrian links in a closed contact $3$-manifold $(M,\xi)$. Then $L_1$ is \textbf{(coarse) equivalent} to $L_2$ if there exists a contactomorphism $f$ of $(M,\xi)$
\[\begin{array}{rccc}
	f\colon &(M,\xi)&\longrightarrow &(M,\xi)\\
	&L_1 &\longmapsto& L_2,
\end{array}\]
that maps $L_1$ to $L_2$. Then we write $L_1 \sim L_2$.
\end{defi}

\begin{rem}[Coarse equivalence vs Legendrian isotopy]
The (coarse) equivalence is, in general, a weaker condition than the equivalence given by Legendrian isotopy (for example in overtwisted contact structures on $S^3$~\cite{Vo16}). But it is known that in $(S^3,\xi_{st})$ this two concepts are the same, since every contactomorphism of $(S^3,\xi_{st})$ is isotopic to the identity~\cite{El92} (compare also the discussion in~\cite[Section~4.3]{EF}).
\end{rem}

It is a standard fact that every Legendrian knot $K$ in a general contact $3$-manifold $(M,\xi)$ has a so-called \textbf{standard neighborhood} $\nu K$ in $(M,\xi)$ which is contactomorphic to
\begin{equation*}
\big(S^1\times D^2,\ker(\cos n\theta \,dx-\sin n\theta\,dy)  \big),
\end{equation*}
with $n$ a non-vanishing integer (which corresponds to the chosen topological identification of the neighborhood $\nu K$ with a copy of $S^1\times D^2$), $S^1$-coordinate $\theta$ and Cartesian coordinates $(x,y)$ on $D^2$. This contactomorphism maps $K$ to $S^1\times\{0\}$ (see~\cite[Example~2.5.10]{G}). It is easy to show that the boundary of this standard neighborhood is a convex surface. When we write $\nu K$ for a Legendrian knot $K$, we always mean that $\nu K$ is such a standard neighborhood and analogously to Section~\ref{section:complement} we call the complement $(M\setminus\mathring{\nu K},\xi)$ of the interior of such a standard neighborhood the \textbf{exterior} of the Legendrian knot $K$. 

Again, it follows by restricting the contactomorphism from Definition~\ref{defi:equi} to the knot exteriors that two equivalent Legendrian links have contactomorphic exteriors. The question if the reverse implication is also true we call the \textbf{Legendrian link exterior problem}. The following precise statement of Theorem~\ref{main:main1} says that this is true for Legendrian knots in $(S^3,\xi_{st})$.

\begin{thm} [Legendrian knot exterior theorem] \label{thm:contact1} 
Let $K_1$ and $K_2$ be two Legendrian knots in $(S^3,\xist)$ with contactomorphic exteriors. Then $K_1$ is equivalent to $K_2$.
\end{thm}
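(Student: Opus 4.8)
\medskip
\noindent\emph{Proof plan.} The idea is to transplant the Gordon--Luecke argument of Section~\ref{section:proof} to the contact category, with contact Dehn surgery in place of Dehn surgery and contactomorphisms in place of homeomorphisms. So I would start with a contactomorphism
\[
h\colon \big(S^3\setminus\mathring{\nu K_1},\xi_{st}\big)\longrightarrow \big(S^3\setminus\mathring{\nu K_2},\xi_{st}\big)
\]
of the exteriors (where $\nu K_i$ are standard neighbourhoods), view $(S^3,\xi_{st})$ as the trivial contact surgery along $K_1$, i.e.\ as $\nu K_1\cup_{\varphi_1}(S^3\setminus\mathring{\nu K_1})$ with $\varphi_1(\mu_0)=\mu_1$, and then glue the model solid torus $\nu K_1$ onto the exterior of $K_2$ along $h\circ\varphi_1$. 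The result is a contact Dehn surgery along $K_2$ with contact slope $r_2:=h(\mu_1)$, and exactly as in Section~\ref{section:proof} the map $\operatorname{Id}_{\nu K_1}\cup h$ is a contactomorphism from $(S^3,\xi_{st})$ onto this surgered manifold. Hence a (possibly trivial) contact Dehn surgery along $K_2$ produces $(S^3,\xi_{st})$ again.

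Next I would feed this into the contact analogue of the Gordon--Luecke Surgery Theorem announced in the introduction and established in Section~\ref{section:appendix}: a contact Dehn surgery along a Legendrian knot in $(S^3,\xi_{st})$ whose result is $(S^3,\xi_{st})$ is either the trivial surgery or is performed along a Legendrian unknot of a very restricted type. Applied to $K_2$ and $r_2$ this splits the argument into two cases. If $r_2$ is the trivial slope, then the re-glued model torus is a standard neighbourhood of a Legendrian knot coarse equivalent to $K_2$, its core $S^1\times\{0\}\subset\nu K_1$ is $K_1$ on the domain side, and since $\operatorname{Id}_{\nu K_1}\cup h$ restricts to the identity on $\nu K_1$ it carries $K_1$ onto that core; thus $K_1\sim K_2$, just as at the end of Section~\ref{section:proof}. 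If $r_2$ is non-trivial, then $K_2$ is a Legendrian unknot; repeating the construction with $h^{-1}$ either returns us to the trivial case (and we are done) or shows that $K_1$ is a Legendrian unknot as well, so it only remains to prove that two Legendrian unknots with contactomorphic exteriors are coarse equivalent.

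For this last point I would combine the Eliashberg--Fraser classification of Legendrian unknots by the pair $(\mathrm{tb},\mathrm{rot})$ \cite{EF} with the fact that the exterior of a Legendrian unknot is a tight contact solid torus whose contact-geometric data recovers this pair up to the reflection $\mathrm{rot}\mapsto-\mathrm{rot}$: the slope of the dividing set on its convex boundary, measured against the canonical meridian of the exterior, determines $\mathrm{tb}$, while the relative Euler class (equivalently, the basic-slice decomposition of the solid torus) encodes $|\mathrm{rot}|$. A contactomorphism of exteriors preserves all of this, and coarse equivalence of Legendrian unknots is detected by exactly this data, so $K_1\sim K_2$. If the ``very restricted type'' above should consist only of the standard $\mathrm{tb}=-1$ unknot, this step collapses to the observation that $K_1$ is smoothly unknotted with $\mathrm{tb}=-1$ and hence standard.

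The genuine obstacle is the contact Surgery Theorem used in the middle step, which, unlike Theorem~\ref{thm:gordon-luecke2}, is not available off the shelf. I would obtain it by first applying Theorem~\ref{thm:gordon-luecke2} to the underlying smooth surgery (forcing the surgered knot to be an ordinary unknot, or the surgery to be trivial) and then eliminating all but the special Legendrian unknots: for each remaining Legendrian unknot and each non-trivial contact slope that topologically gives $S^3$, one exhibits an auxiliary Legendrian knot inside the surgered manifold whose Thurston--Bennequin invariant, computed via the formula of Section~\ref{section:LOSS}, violates the Bennequin inequality and thereby certifies that the surgered contact structure is overtwisted rather than $\xi_{st}$. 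A minor point worth recording is that, because $h$ is a contactomorphism of compact manifolds-with-boundary, the whole argument stays on the level of exteriors and never needs the open/closed-complement comparison discussed just before the theorem.
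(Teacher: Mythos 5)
Your proposal follows essentially the same route as the paper: the same surgery-transplant diagram producing a contact surgery along $K_2$ with slope $r_2=h(\mu_1)$, the same appeal to the Contact Dehn Surgery Theorem (itself proved, as you sketch, via Gordon--Luecke plus Bennequin-inequality violations computed with the $\operatorname{tb}$ formula of Section~\ref{section:LOSS}), and the same two-case conclusion. Your final step for matching the Legendrian unknots (dividing-set slope and relative Euler class of the tight exterior solid torus) is just a mild repackaging of the paper's argument, which counts intersections of the Seifert disk with the dividing set to recover $\operatorname{tb}(U_n)$ and then invokes the Eliashberg--Fraser classification, so the proof is correct as proposed.
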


\begin{rem}[Unoriented Legendrian links]
Here $K_1$ and $K_2$ are understood to be unoriented Legendrian knots because the exterior of a knot cannot see its orientation. But if one fixes an oriented longitude of the knot in its exterior, the same result holds also for oriented Legendrian knots.
\end{rem}

For Legendrian links in $(S^3,\xist)$ Theorem~\ref{thm:contact1} is in general wrong. In Section~\ref{counterexample} we will give some examples of Legendrian links in $(S^3,\xist)$ that are \textbf{not} determined by the contactomorphism type of their exteriors and in Section~\ref{section:other} we will give a discussion about the Legendrian knot exterior problem in general contact manifolds.

In contrast to the topological setting, it is not clear if Theorem~\ref{thm:contact1} is also true for the knot complements instead of the knot exteriors.

For a general link $K$ in a general $3$-manifold $M$, we can compare the relation of the equivalence class of this link to the homeomorphism type of its complement, its open exterior, and its closed exterior. These relations are shown in Figure~\ref{fig:diagram}.

\begin{figure}[htbp] 
\centering
\def\svgwidth{0,96\columnwidth}
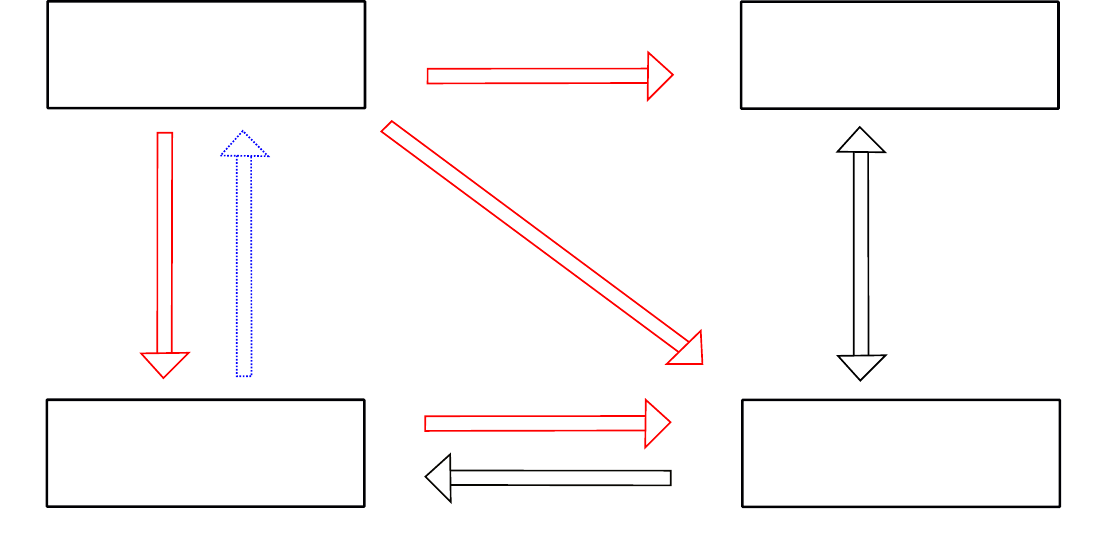
\caption{Relation between different types of classifications}
\label{fig:diagram}
\end{figure}

If two links $K_1$ and $K_2$ are equivalent then one can restrict the homeomorphism of $M$ mapping one link to the other to subsets of $M$. Since such a homeomorphism has to map a tubular neighborhood of one link to a tubular neighborhood of the other link one gets the red implications in Figure~\ref{fig:diagram} by restriction.

Moreover, it is easy to construct a homeomorphism of the complement of a link to the open exterior of the same link. (Take a homeomorphism of a punctured disk to a half-open annulus in every $D^2$-slice of the tubular neighborhood.) 

The mentioned non-trivial theorem of Edwards~\cite{E} states that two $3$-manifolds with boundary are homeomorphic if and only if their interiors are homeomorphic. Therefore, it is also equivalent to consider closed or open link exteriors.

And finally, the blue implication is exactly the statement of the link exterior problem, which is in general not true, but holds for knots in $S^3$.

Now let $K_1$ and $K_2$ be Legendrian links in some contact $3$-manifold $(M,\xi)$. First recall that all standard neighborhoods of Legendrian links are contactomorphic. Therefore, for Legendrian links, the exteriors are independent of the tubular neighborhoods as in the topological case. (This is not true for transverse knot, see~\cite{Ke17b}).

If two Legendrian links are equivalent then one can again restrict the contactomorphism to subsets and gets as in the topological case the red implications in Figure~\ref{fig:diagram} by restriction. 

But whether the black implications from Figure~\ref{fig:diagram} also hold in contact geometry is not clear. Contact structures on manifolds which are interiors of compact $3$-manifolds with boundary are studied by Eliashberg~\cite{E91}, Makar-Limanov~\cite{ML} and Tripp~\cite{Tr06}. With the methods developed there, it should be possible to study if the theorem of Edwards also holds for Legendrian and transverse links.

But the relation between the complements of Legendrian or transverse links and their open exteriors remains mysterious. The topological diffeomorphism between these sets do not preserve the contact structures, it remains open if such a contactomorphism exists or not.

Finally, the blue implication is the Legendrian or transverse link exterior problem, which is again in general not true, but holds for Legendrian and transverse knots in $(S^3,\xi_{st})$.

	
	\section{Contact Dehn Surgery}	
	\label{section:contactsurgery}
		
To generalize the proof from the topological setting to Legendrian knots in contact manifolds, we first want to recall the well-known definition of contact Dehn surgery along Legendrian knots.
	
\begin{defi}[Contact longitude] 
Let $K$ be a Legendrian knot in a contact $3$-manifold $(M,\xi)$. Then there is a distinguished, so-called \textbf{contact longitude} $\lambda_c$ on $\partial (\nu K)$, given by pushing $K$ in a direction transverse to the contact planes, for example in the direction of the Reeb vector field. 
\end{defi}	

If a Legendrian knot $K$ in a contact $3$-manifold $(M,\xi)$ is also nullhomologous it admits two distinguished longitudes, the surface longitude $\lambda_s$ and the contact longitude $\lambda_c$. These two longitudes differ only by an integer number of meridians $\mu$. This integer number is called the \textbf{Thurston--Bennequin invariant} $\tb(K)$, i.e.
\begin{equation*}
\lambda_c=\tb(K)\mu+\lambda_s \in H_1(\partial \nu K;\Z).
\end{equation*}
Now, we want to do Dehn surgery along a Legendrian knot $K$ with respect to the contact longitude $\lambda_c$. Again, if we write a slope $r$ as $r=p\mu+q\lambda_c$ the topological type of the surgered manifold $M_K(r)$ is already determined by the rational number $r_c=p/q\in\Q\cup\{\infty\}$, called the \textbf{contact surgery coefficient}. Sometimes, if $K$ is also nullhomologous, we want to compute the topological surgery coefficient~$r_s$ (with respect to the surface longitude $\lambda_s$) from the contact surgery coefficient $r_c$. This can be done via the formula $r_{s}=r_c+\operatorname{tb}(K)$.

Then we can extend the old contact structure on the knot exterior to a global contact manifold of the surgered manifold.

\begin{thm}[Contact Dehn surgery]\label{thm:contsurg}
Let $K$ be a Legendrian knot in a contact $3$-manifold $(M,\xi)$. \\
(1) Then $M_K(r)$ carries a (non-unique) contact structure $\xi_K(r)$, which coincides with the old contact structure $\xi$ on $M\setminus\mathring{\nu K}$.\\
(2) For $r\neq\pm \lambda_c$ one can choose $\xi_K(r)$ to be tight on the new glued-in solid torus.\\
(3) For $r=\mu+q\lambda_c$ this tight contact structure on this new solid torus is unique.
\end{thm}

For any of these choices for the contact structure $\xi_K(r)$ we say that the contact manifold $(M_K(r),\xi_K(r))$ is obtained from $(M,\xi)$ by \textbf{contact Dehn surgery} along the Legendrian knot $K$ with slope $r$. We give a proof of Theorem~\ref{thm:contsurg} following~\cite{DG} (see~\cite{Ke} for more details).

\begin{proof} [Proof of Theorem~\ref{thm:contsurg}] \hfill\\
Since every Legendrian knot $K$ looks locally the same we can think of $K$ as $S^1\times\{0\}$ in the standard model $S^1\times D^2_R$ (where $D^2_R$ denotes a disk with sufficiently big radius~$R$) with contact structure given as the kernel of 
\begin{equation*}
\cos(n\theta)dx-\sin(n\theta)dy. 
\end{equation*}
We choose $\nu K$ as $S^1\times D^2\subset S^1\times D^2_R$. Observe that $\partial (\nu K)$ is a convex surface with two dividing curves parallel to $\lambda_c$. 

Next, we delete $\nu K$ and glue back a new (topological) copy of $S^1\times D^2$ via a gluing diffeomorphism $\varphi$ of the boundaries with
\begin{align*}
\varphi\colon \partial(S^1\times D^2)&\longrightarrow \partial(\nu K)\\
\mu_0&\longmapsto r=p\mu+q\lambda_c.
\end{align*}

If we can find a contact structure $\xi'$ on the newly glued-in $S^1\times D^2$ with convex boundary and two dividing curves mapping under $\varphi$ to the dividing curves of $\partial(\nu K)$ then $\xi'$ glues together with the old contact structure to a global contact structure on the surgered manifold.

For $r=\lambda_c$ such a contact structure $\xi'$ has to be overtwisted since in this case $\lambda_c$ represents the boundary of an overtwisted disk in the surgered manifold. For the existence of such an overtwisted contact structure see~\cite[pages 586--587]{DG1}. 

For $r\neq\lambda_c$ it follows that $\varphi^{-1}(\lambda_c)\neq\mu_0$. The classification of tight contact structures on solid tori with convex boundaries with two parallel dividing curves by Honda~\cite{H} implies that such a contact structure always exists and can be chosen to be tight. However, in general, this contact structure is not unique.

But if $r=\mu+q\lambda_c$ we can choose $\varphi\colon\lambda_0\mapsto\lambda_c$, for an arbitrary longitude $\lambda_0$ of the newly glued-in solid torus and the same classification result of Honda~\cite{H} says that there is only one tight contact structure with convex boundary and two dividing curves parallel to $\lambda_0$.
\end{proof}

\begin{ex}[A unique contact $(+2)$-surgery]\label{ex:unique}
Consider the contact surgery along the Legendrian unknot with $\operatorname{tb}=-1$ and (contact) surgery coefficient $+2$ (see Figure~\ref{fig:exoticsurgery}). The contact surgery coefficient $+2$ (measured with respect to the contact longitude $\lambda_c$) corresponds then to the slope $r=2\mu+\lambda_c$. By expressing this slope with respect to the surface longitude $\lambda_s$ one gets $r=\mu+\lambda_s$. It follows that the resulting manifold is by Example~\ref{ex:surgeryUnknot} topologically again $S^3$.

\begin{figure}[htbp] 
\centering
\def\svgwidth{0.99\columnwidth}
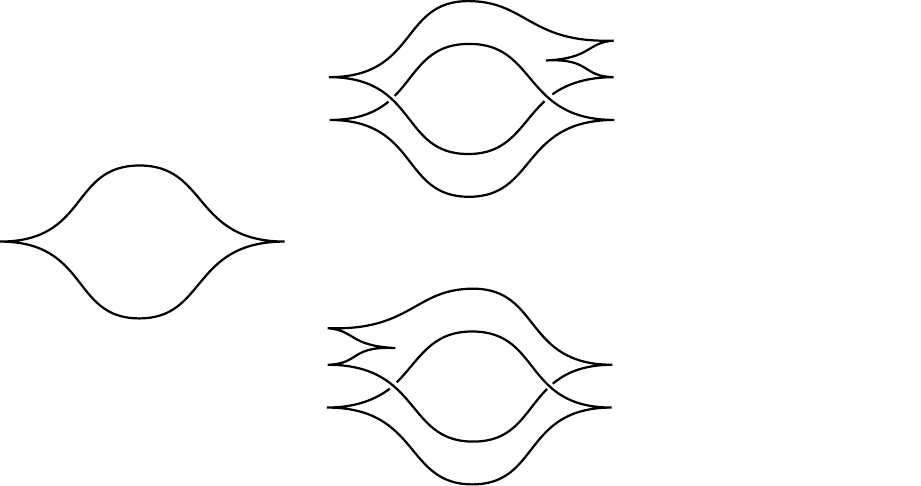
\caption{A unique contact $(+2)$-surgery resulting again in $(S^3,\xi_{st})$}
\label{fig:exoticsurgery}
\end{figure}	

Next, we want to show that the resulting contact structure is unique and leads again to $\xi_{st}$ (if one requires the contact structure on the new glued-in solid torus to be tight). In general, a contact Dehn surgery with contact surgery coefficient not of the form $1/q$ is not unique. But actually, in this example, it is. To see this, one first uses the algorithm in~\cite[Section~1]{DGS} (see also~\cite{DG1}) to change the contact surgery diagram into contact surgeries along a link with only $\pm1$ surgery coefficients (see Figure~\ref{fig:exoticsurgery}). Observe that different choices of stabilizations lead in general to different contact structures (and correspond exactly to the different contact structures on the glued-in solid torus), but in this case, the resulting contact structures are contactomorphic. The contactomorphism of the resulting manifold is induced by the contactomorphism $(x,y,z)\mapsto(-x,-y,z)$ of the old $(S^3,\xi_{st})$, that maps one link to the other (see also~\cite[~Section~9]{DG10}).

To see that this contact structure is really $\xi_{st}$, it is enough to show that the contact structure is symplectically fillable. In~\cite[Lemma~4.2.]{DGS} it is shown, that contact $(+1)$-Dehn surgery along the Legendrian unknot with $\tb=-1$ leads to $S^1\times S^2$ with the unique Stein fillable contact structure on it. Because contact $(-1)$-Dehn surgery along a Legendrian knot in a contact $3$-manifold $(M,\xi=\ker\alpha)$ corresponds to a symplectic $4$-handle attachment to the positive boundary of the symplectization $([0,1]\times M,d(e^t\alpha))$, it preserves symplectic fillability and the claim follows.
\end{ex}

To prove Theorem~\ref{thm:contact1} we now want to give the precise statement of Theorem~\ref{main:main0}, the generalization of Theorem~\ref{thm:gordon-luecke2} to the contact setting. For stating this result we introduce the Legendrian knots $U_n$ as Legendrian unknots with classical invariants $\tb(U_n)=-n$ and $\vert\rot(U_n)\vert= n-1$. 

The classification of Legendrian unknots in $(S^3,\xi_{st})$ by Eliashberg--Fraser~\cite{EF} says that two Legendrian unknots are equivalent (as oriented knots) if and only if they have the same $\tb$ and $\rot$ where the orientation of the knot is given by the sign of $\rot$. 

Here we want to prove that two Legendrian knots are equivalent if and only if their exteriors are contactomorphic. Since a knot exterior cannot determine the orientation of the knot this result can only hold for equivalence of unoriented knots. This is the reason why we consider Legendrian knots up to equivalence of unoriented knots.

From the theorem of Eliashberg--Fraser~\cite{EF} it follows that the knots $U_n$ are unique up to equivalence (of unoriented Legendrian knots). A front projection of a Legendrian unknot of type $U_n$ is shown in Figure~\ref{fig:legendreunknoten}.

\begin{figure}[htbp] 
\centering
\def\svgwidth{0.4\columnwidth}
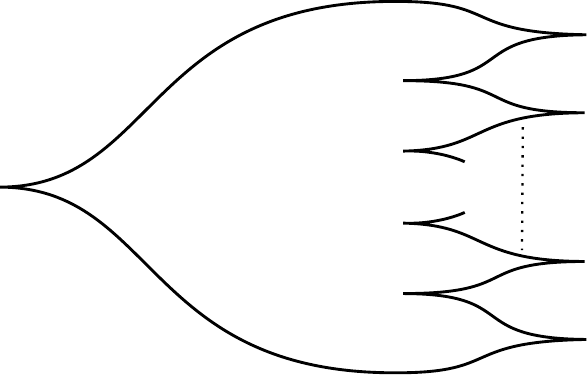
\raisebox{9.9ex}{$\left.\rule[9.1ex]{0pt}{\baselineskip}\right\}\,n-$times}
\caption{The front projection of a Legendrian unknot $U_n$}
\label{fig:legendreunknoten}
\end{figure} 

The generalization of Theorem~\ref{thm:gordon-luecke2} is then as follows.

	\begin{thm} [Contact Dehn surgery theorem]\label{thm:contact2}
Let $K$ be a Legendrian knot in $(S^3,\xist)$. If some result $(S^3_K(r),\xi_K(r))$ of contact $r$-surgery along $K$ is contactomorphic to $(S^3,\xist)$ for some $r\neq \mu$ then $K$ is equivalent to a Legendrian unknot $U_n$ with $\tb(U_n)=-n$ and $\rot(U_n)=\vert n-1\vert$.
\end{thm}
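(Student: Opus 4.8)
The plan is to reduce to a Legendrian unknot by a topological argument and then to constrain the rotation number by a contact obstruction coming from the Bennequin inequality. A contactomorphism is in particular a homeomorphism, so the hypothesis gives $S^3_K(r)\cong S^3$ with $r\neq\mu$, and the Surgery Theorem~\ref{thm:gordon-luecke2} shows that $K$ is topologically an unknot. Thus $K$ is a Legendrian unknot, classified by its classical invariants by Eliashberg--Fraser~\cite{EF}; set $\operatorname{tb}(K)=-n$. Tightness of $\xi_{st}$ together with the Bennequin inequality forces $n\geq1$, and the admissible rotation numbers are $\operatorname{rot}(K)\in\{-(n-1),-(n-3),\dots,n-1\}$. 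I emphasize that this classification does \emph{not} by itself force $\operatorname{rot}(K)$ to be extremal, nor does it force $n=1$: for $n\leq2$ every admissible rotation number already equals $\pm(n-1)$, so the real content is to exclude the interior rotation numbers, which first occur at $n=3$ (for instance $U_3$ with $\operatorname{rot}=0$). The case $n=1$ is precisely the $+2$-surgery of the Example above.

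Next I would record which slopes can occur. Relating the contact longitude $\lambda$ to the surface longitude by $\lambda=\lambda_s-n\mu$, a slope with contact coefficient $p/q$ has topological coefficient $p/q-n$. Since surgery on the unknot with topological coefficient $p'/q'$ yields the lens space $L(p',q')$, the requirement $S^3_K(r)\cong S^3$ forces $p'=\pm1$, i.e.\ the topological coefficient is $\pm1/m$ for some $m\in\Z\setminus\{0\}$; the excluded value corresponds to $r=\mu$. This leaves, for each $U_n$, an explicit infinite family of candidate contact surgeries, realized topologically as Rolfsen twists along $U_n$, and I must rule out the whole family whenever $\operatorname{rot}(U_n)$ is interior.

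For each such candidate with interior rotation number the strategy of Section~\ref{section:appendix} is to exhibit a Legendrian knot in the exterior whose Thurston--Bennequin invariant, computed after the surgery, violates the Bennequin inequality, so that $\xi_K(r)$ cannot be tight and hence cannot be $\xi_{st}$. Concretely I would draw $U_n$ in the standard front with $a$ down-zigzags and $b$ up-zigzags, so that $a+b=n-1$ and $\operatorname{rot}=a-b$, the extremal cases being $a=0$ or $b=0$. Into the complement of $U_n$ I place a Legendrian test knot $L$ --- a Legendrian unknot linking $U_n$, with linking number and stabilizations chosen in accordance with the zigzags of $U_n$ --- and compute $\operatorname{tb}(L)$ in the surgered manifold using the formula of Section~\ref{section:LOSS}. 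The correction term of that formula involves both the topological linking of $L$ with $U_n$ (the framing jump of the Rolfsen twist) and the contact data encoding the stabilizations; the aim is to arrange $L$ to be smoothly unknotted in the surgered $S^3$ with $\operatorname{tb}(L)\geq0$. Because every Legendrian unknot in $(S^3,\xi_{st})$ satisfies $\operatorname{tb}\leq-1$, this is a contradiction, which eliminates every candidate with interior rotation number and leaves exactly the unknots $U_n$ with $\operatorname{rot}=\pm(n-1)$.

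The hard part is this last step: choosing $L$ and evaluating the formula so that the Bennequin violation appears for \emph{exactly} the interior rotation numbers and not for the extremal ones. The difficulty is that the purely topological framing change coming from the Rolfsen twist is insensitive to $\operatorname{rot}(U_n)$, so the distinction between extremal and interior stabilizations must enter through the contact-geometric part of the $\operatorname{tb}$-formula --- equivalently, through which tight structure is glued into the surgery torus. I would therefore track precisely how each up- and each down-zigzag of $U_n$ changes the contact framing of $L$ after the twist, and show that a violation is forced exactly when both signs of zigzag are present. A secondary difficulty is handling the full family of admissible coefficients $\pm1/m$ uniformly, and checking that in the extremal cases no such $L$ exists, consistent with those surgeries genuinely returning $(S^3,\xi_{st})$.
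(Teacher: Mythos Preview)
Your outline tracks the paper's approach closely: Gordon--Luecke reduces $K$ to a Legendrian unknot, the topological coefficient must be $1/n$, and a Legendrian test knot in the complement is to violate a Bennequin-type bound after surgery. But the mechanism you propose for separating extremal from interior rotation numbers is the wrong one, and if you pursue it you will get stuck.

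You assert that ``the distinction between extremal and interior stabilizations must enter through the contact-geometric part of the $\operatorname{tb}$-formula --- equivalently, through which tight structure is glued into the surgery torus.'' This is exactly backwards: the formula of Section~\ref{section:LOSS} depends only on $\operatorname{tb}_{old}$, the linking numbers $l_{ij}$, and the topological surgery coefficients, and the paper emphasizes several times that the answer is independent of the contact structure on the new solid torus. For a fixed test knot $L_0$ the number $\operatorname{tb}_{new}(L_0)$ is therefore the same whether $\operatorname{rot}(U)$ is extremal or interior; the formula simply cannot see $\operatorname{rot}(U)$. The distinction enters instead through whether the required \emph{configuration} of $L_0$ relative to $U$ can be realized Legendrianly in the complement of the given Legendrian unknot.

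Concretely, the paper's argument splits by the sign of $n$. For $n<0$ the test knot is a $\operatorname{tb}=-1$ Legendrian meridian with linking number $1$; this exists for \emph{every} Legendrian unknot, stays a smooth unknot after the Rolfsen twist, and has $\operatorname{tb}_{new}=-1-n\geq0$, so all Legendrian unknots are ruled out when $n<0$, extremal or not. For $n>0$ the test knot $L_0$ has $\operatorname{tb}_{old}=-1$ and linking number $2$, positioned as in Figure~\ref{fig:proofofcontactsurgerytheorem21}; after surgery $L_0$ becomes a negative $(2,2n{+}1)$-torus knot (so your ``smoothly unknotted with $\operatorname{tb}\geq0$'' target must be dropped here) with $\operatorname{tb}_{new}=-1-4n$, exceeding the maximal value $-2-4n$ from~\cite{EH}. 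That front can be drawn whenever $U$ carries zigzags of both signs, i.e.\ whenever $\operatorname{rot}(U)$ is interior; this is the content of the paper's sentence ``For $U$ not coarse equivalent to a Legendrian unknot of the form $U_n$ this example can be realized as a contact surgery along a Legendrian knot.'' For the extremal $U_n$ no such $L_0$ is available, and indeed some of those surgeries genuinely return $(S^3,\xi_{st})$.
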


\begin{rem}[Non-uniqueness of the contact structure]
For general slopes the contact structure $\xi_K(r)$ is not unique. So one should read Theorem~\ref{thm:contact2} as follows: If there is a contact structure $\xi_K(r)$ on $S^3_K(r)$ such that $(S^3_K(r),\xi_K(r))$ is obtained from $(M,\xi)$ by contact Dehn surgery along $K$ with slope $r$ and if $(S^3_K(r),\xi_K(r))$ is contactomorphic to $(S^3,\xi_{st})$ for $r\neq \mu$, then the conclusion holds.
\end{rem}

The proof of Theorem~\ref{thm:contact2} is given in Section~\ref{section:appendix}. Assuming Theorem~\ref{thm:contact2} the proof of Theorem~\ref{thm:contact1} is now easy and similar to the topological case.

\begin{proof}[Proof of Theorem~\ref{thm:contact1}]\hfill\\
Pick a contactomorphism 
\begin{equation*}
h\colon\big(S^3\setminus\mathring{\nu K_1},\xist\big)\longrightarrow \big(S^3\setminus\mathring{\nu K_2},\xist\big).
\end{equation*}
And then consider again the following diagram:\\
\hfill\\
\begin{xy}
(0,40)*+{\big(S^3,\xist\big)}="a";(8,40)*+{\cong}="a1"; (24,40)*+{\big(S^3_{K_1}  (\mu_1),\xi_{K_1}  (\mu_1)\big)}="b"; (40,40)*+{:=}="b1"; (52,40)*+{\big(S^1\times D^2,\xi'\big)}="c"; (75,40)*+{+}="d"; (100,40)*+{\big(S^3\setminus\mathring{\nu K_1},\xist\big)}="e";(114,40)*+{\big/_\sim}="e1";%
(53,35)*+{\mu_0}="f"; (99,35)*+{\mu_1}="g";%
(75,20)*+{\circlearrowright}="k1";%
(53,5)*+{\mu_0}="l"; (92,5)*+{r_2:=h(\mu_1)}="m";%
(24,0)*+{\big(S^3_{K_2}  (r_2),\xi_{K_2}  (r_2)\big)}="n";(40,0)*+{:=}="n1"; (52,0)*+{\big(S^1\times D^2,\xi'\big)}="o"; (75,0)*+{+}="p"; (100,0)*+{\big(S^3\setminus\mathring{\nu K_2},\xist\big)}="q";(114,0)*+{\big/_\sim}="q1";%
{\ar@{-->}_{f} "b";"n"};{\ar@{->}@/_1pc/ _{\operatorname{Id}} "c";"o"};{\ar@{->}@/^1pc/^h "e";"q"};%
{\ar@{|->}_{\varphi_1} "f";"g"};{\ar@{|->}^{h\circ\varphi_1} "l";"m"};
\end{xy}\\
\hfill\\
Here the contact structure $\xi'$ denotes the unique tight contact structure on $S^1\times D^2$ with convex boundary corresponding to the slope $\mu_1$ (see proof of Theorem~\ref{thm:contsurg}). Because the contactomorphisms $\operatorname{Id}$ and $h$ on the two factors agree on the boundary convex surfaces, which determine the germ of the contact structures, these two maps glue together to a contactomorphism $f$ of the whole contact manifolds. From Theorem~\ref{thm:contact2} it follows that $r_2$ is equal to $\mu_2$ or $K_2$ is equivalent to $U_n$ for some $n$.

If $r_2=\mu_2$ then this is the trivial contact Dehn surgery, and so the contactomorphism $f$ maps $K_1$ to $K_2$.

In the other case, one makes the same argument with $K_1$ and $K_2$ reversed and concludes that $K_1$ is equivalent to $U_m$ for some $m$. Again the classification result of Eliashberg-Fraser~\cite{EF} implies that $K_1\sim U_m$ is equivalent to $U_n \sim K_2$ if and only if $n=m$. To show the last statement one observes that $(S^3\setminus\mathring{\nu K_1},\xist)$ is a solid torus with tight contact structure and convex boundary. Therefore, one can compute $-n=\operatorname{tb}(U_n)$ also as half the number of intersection points of the Seifert disk of $U_n$ with the dividing set of the convex boundary (see~\cite[Theorem~2.30]{Et}). Because the Seifert disks of $U_n$ and $U_m$ are both given by the $D^2$-factors of the exterior solid tori and because the exteriors are contactomorphic, the number of intersection points stays the same.
\end{proof}


\section{Computing the Thurston--Bennequin Invariant of a Legendrian knot in a surgery diagram}
\label{section:LOSS}

To prove Theorem~\ref{thm:contact2} one first determines with Theorem~\ref{thm:gordon-luecke2} all surgeries that lead again to $S^3$. The main problem is then that there are always many different choices for extending the contact structure over the new glued-in solid torus. But a very simple proof can be given by finding some new Legendrian knots in the exteriors of the surgery knots, that violates the Bennequin inequality in the new contact manifold. For doing this we want to present in this section a formula for computing the Thurston--Bennequin invariant of a Legendrian knot in a surgered manifold. 

The main problem when doing this is that the Thurston--Bennequin invariant is only defined for nullhomologous (or rationally nullhomologous) knots and a surgery, in general, destroys this property. But as long as the resulting manifold is a homology sphere (or a rational homology sphere) every knot has to be nullhomologous (or rationally nullhomologous). In this case formulas for computing the new Thurston--Bennequin invariants out of the old ones and out of the algebraic surgery data are given in~\cite[Lemma~6.6]{LOSS},~\cite[Lemma~2]{GO} and~\cite[Lemma~6.4]{C}. However, these formulas only work contact $(\pm1)$-surgeries and since we are concerned with a general contact $r$-surgery these formulas cannot be used here. Therefore, we present now a new formula to compute the Thurston--Bennequin invariant of a Legendrian knot presented in a general rationally contact $r$-surgery diagram along Legendrian knots. Moreover, our formula works also in general contact manifolds, for contact surgeries along non-Legendrian knots and also simplifies the computations in contact $(\pm1)$-surgery diagrams.

Building up on this and using the formulas in~\cite{LOSS,GO,C} one can also get formulas for computing rotation numbers of Legendrian knots, self-linking numbers of transverse knots and the $\de_3$-invariant of the resulting contact manifold in general contact $(1/n)$-surgery diagrams along Legendrian knots~\cite{DK}. In~\cite{DKK,DK2} similar formulas are given for computing the classical invariants of Legendrian knots sitting on the page of a contact open book.


\subsection{Computing the homology class of a knot}\hfill  \\
First, we want to give an easy (and easy to check) condition (out of the algebraic surgery data) when such a knot is nullhomologous in the new manifold. If this is the case we secondly show how to compute out of this data the new Thurston--Bennequin invariant.

Let $L=L_1\sqcup\cdots\sqcup L_n\subset S^3$ be an oriented link (where the choice of orientation is not important). And let $M$ be the $3$-manifold obtained out of $S^3$ by Dehn surgery along $L$ with topological surgery coefficients $r_i=p_i/q_i$, for $i=1,\ldots, n$. Denote by $L_0\subset S^3\setminus \mathring{\nu L}$ an oriented knot in $S^3$ and $M$ depending on the context. For simplicity write the linking numbers as $l_{ij}:=\lk(L_i,L_j)$, for $i=0,\ldots,n$. Set also
\begin{align*}
Q:=\begin{pmatrix}
p_1&q_2 l_{12} &\cdots&q_n l_{1n}\\
q_1 l_{21} & p_2&&\\
\vdots&&\ddots\\
q_1 l_{n1}&&& p_n
\end{pmatrix}\,\text{ and   }\,\,\, \mathbf{l}:=\begin{pmatrix}
l_{01}\\
\vdots\\
l_{0n}
\end{pmatrix}.
\end{align*}
The matrix $Q$ is a generalization of the linking matrix because for $q_i=1$ the matrix $Q$ is the linking matrix.

The knot $L_0$ is called \textbf{nullhomologous} in $M$ if $[L_0]=0\in H_1(M;\Z)$. One can show (see for example~\cite[Page~123]{GS}) that this is equivalent to the existence of a Seifert surface for the knot, so the surface longitude for a knot is defined if and only if the knot is nullhomologous. Recall also that the surface longitude $\lambda_s$ is independent of the choice of the explicit Seifert surface of the knot.

With the following lemma, one can decide from the algebraic surgery data if such a knot is nullhomologous in the surgered manifold.
\begin{lem}  [Nullhomologous knots]\label{lem:homology}
$L_0$ is nullhomologous in $M$ if and only if there exists an $\mathbf{a}\in\Z^n$ such that $\mathbf{l}=Q\mathbf{a}$.
\end{lem}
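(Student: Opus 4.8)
The plan is to compute $H_1(M;\Z)$ directly from a presentation coming from the surgery description and to identify the homology class $[L_0]$ inside it. First I would recall that the surgered manifold $M$ is built from the link exterior $X := S^3\setminus\mathring{\nu L}$ by gluing in $n$ solid tori along the slopes $r_i = p_i\mu_i + q_i\lambda_i$, where $\lambda_i$ is the surface longitude of $L_i$. By a Mayer--Vietoris (or van Kampen plus abelianization) argument, $H_1(M;\Z)$ is the quotient of $H_1(X;\Z)$ by the subgroup generated by the classes of the attaching curves $r_i$. Now $H_1(X;\Z)\cong\Z^n$, freely generated by the meridians $\mu_1,\dots,\mu_n$ of the components $L_i$ (Alexander duality / the standard computation for link exteriors in $S^3$), and under this identification the surface longitude satisfies $[\lambda_i] = \sum_{j\neq i} l_{ij}\,\mu_j$ in $H_1(X;\Z)$, since $\lambda_i$ bounds a Seifert surface in $S^3$ which meets the other components $L_j$ algebraically $l_{ij}$ times. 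Hence the class of the $i$-th attaching curve is $[r_i] = p_i\mu_i + q_i\sum_{j\neq i} l_{ij}\mu_j$, which is exactly the $i$-th column of the matrix $Q$. Therefore $H_1(M;\Z)\cong \Z^n/Q\,\Z^n = \operatorname{coker} Q$, with the meridians $\mu_i$ as the images of the standard basis vectors.

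Next I would locate $[L_0]$ in this description. Viewing $L_0$ as a knot in $X$ (and then in $M$), its class in $H_1(X;\Z)\cong\Z^n$ is $\sum_{i=1}^n l_{0i}\,\mu_i = \mathbf{l}$, again because $L_0$, being nullhomologous in $S^3$, bounds a Seifert surface there meeting $L_i$ algebraically $l_{0i} = l_{i0}$ times, so its boundary on $\partial(\nu L_i)$ is $l_{0i}$ copies of $\mu_i$ up to longitudinal corrections that die in $H_1(X)$. Under the quotient map $\Z^n \to \Z^n/Q\Z^n \cong H_1(M;\Z)$, the element $[L_0]$ is the image of $\mathbf{l}$. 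Consequently $[L_0] = 0$ in $H_1(M;\Z)$ if and only if $\mathbf{l}$ lies in the image of $Q$, i.e.\ if and only if there is $\mathbf{a}\in\Z^n$ with $\mathbf{l} = Q\mathbf{a}$, which is the claim.

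The step I expect to be the most delicate is the clean identification of the surface-longitude classes and of $[L_0]$ inside $H_1(X;\Z)$ — i.e.\ getting the orientation conventions and the relation $[\lambda_i] = \sum_{j\neq i} l_{ij}\mu_j$ exactly right, including checking that the longitudinal part of $\partial\Sigma_0 \cap \partial(\nu L_i)$ for a Seifert surface $\Sigma_0$ of $L_0$ contributes nothing to $H_1(X;\Z)$ (it is a multiple of $\lambda_i$, which is itself a combination of meridians of the \emph{other} components, so it does not interfere with the $\mu_i$-coefficient bookkeeping once one passes to $\operatorname{coker} Q$). The Mayer--Vietoris bookkeeping for the gluing, and the fact that $H_1$ of the glued-in solid tori is killed while their meridians impose the relations $[r_i]=0$, is then routine; the reference \cite[Page~123]{GS} cited in the statement handles exactly this kind of computation, and I would simply invoke it for the presentation $H_1(M;\Z)\cong\Z^n/Q\Z^n$ and then read off the condition on $\mathbf{l}$.
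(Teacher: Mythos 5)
Your proposal is correct and follows essentially the same route as the paper: present $H_1(M;\Z)$ by the meridians $\mu_1,\dots,\mu_n$ with relations $p_i\mu_i+q_i\sum_{j\neq i}l_{ij}\mu_j=0$ (the columns of $Q$, using $l_{ij}=l_{ji}$), identify $[L_0]=\sum_i l_{i0}\mu_i=\mathbf{l}$, and read off that $[L_0]=0$ exactly when $\mathbf{l}\in Q\,\Z^n$. The only cosmetic difference is that you rederive the presentation via Mayer--Vietoris, whereas the paper simply cites the standard computation in Gompf--Stipsicz.
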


\begin{proof}  \hfill  \\
It is easy to compute the homology of $M$ (see for example~\cite[Proposition~5.3.11]{GS}) as
\begin{align*}
H_1(M;\mathbb{Z})=\mathbb{Z}_{\mu_1}\oplus\cdots\oplus\mathbb{Z}_{\mu_n}/\langle p_i\mu_i+q_i\sum_{\substack{j=1 \\ j\neq i}}^n l_{ij}\mu_j=0|i=1,\ldots,n\rangle,
\end{align*}
where the generators of the $\mathbb{Z}$-factors are given by right-handed meridians $\mu_i$ corresponding to the components $L_i$. Next, we express $L_0$ as a linear combination of the $\mu_i$. One can show that the coefficients are the linking numbers $l_{i0}$, i.e.
\begin{align*}
[L_0]=\sum_{i=1 }^n l_{i0}\mu_i.
\end{align*}
So $L_0$ is nullhomologous if and only if one can express $[L_0]=\sum l_{i0}\mu_i$ as a linear combination of the relations, i.e. if there exists integers $a_i$, $i=1,\ldots, n$, such that
\begin{align*}
\sum_{i=1 }^n l_{i0}\mu_i= \sum_{i=1}^n a_i  \big(p_i\mu_i+q_i\sum_{\substack{j=1 \\ j\neq i}}^n l_{ij}\mu_j\big)= \sum_{i=1}^n \big(a_i  p_i+\sum_{\substack{j=1 \\ j\neq i}}^n q_j l_{ij}a_j\big)\mu_i.
\end{align*}
By comparing the coefficients and by writing the corresponding equations in vector form one sees that this is true if and only if there exists a vector $\mathbf{a}\in\mathbb{Z}^n$ such that $\mathbf{l}=Q\mathbf{a}$.
\end{proof}


\subsection{Computing the Thurston--Bennequin invariant of a Legendrian knot}
\label{subsection:computtingtb}
Now assume $L_0$ is a Legendrian knot in $(S^3\setminus \mathring{\nu L},\xi_{st})\subset(S^3,\xi_{st})$. And let $\xi$ be a contact structure on $M$ that coincides with $\xi_{st}$ outside a tubular neighborhood of $L$. For example if $L$ is also a Legendrian link in $(S^3,\xi_{st})$ and $(M,\xi)$ is the result of a contact Dehn surgery along $L$. But it is important to notice that the setting here is a more general one, we can also use contact surgery along transverse knots or surgery along a knot that is not adapted to the contact structure. 

Here all surgery coefficients are understood to be topological surgery coefficients, i.e. with respect to the surface longitude $\lambda_s$ which has linking number zero with the knot. 

So if one has a Legendrian surgery diagram one first has to change the contact surgery coefficients to topological surgery coefficients, for example with the earlier mentioned formula 
\begin{equation*}
r_{i,top}=r_{i,cont}+\operatorname{tb}(L_i).
\end{equation*}

\begin{lem}  [Computing the Thurston--Bennequin invariant]\label{lem:tb}
If $L_0$ is nullhomologous in $M$, then one can compute the new Thurston--Bennequin invariant $\operatorname{tb}_{new}$ of $L_0$ in $(M,\xi)$ from the old one $\operatorname{tb}_{old}$ of $L_0$ in $(S^3,\xi_{st})$ as
\begin{align*}
\operatorname{tb}_{new}=\operatorname{tb}_{old}-\sum_{i=1}^n{a_i q_i l_{i0}}
\end{align*}
where $\mathbf{a}$ is a vector given by the formula from Lemma~\ref{lem:homology}.
\end{lem}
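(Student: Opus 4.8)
The plan is to compute the Thurston--Bennequin invariant as a self-linking number of a pushoff with respect to the surface longitude $\lambda_s$, and to track how this longitude changes when one passes from $(S^3,\xi_{st})$ to $(M,\xi)$. Recall that $\operatorname{tb}(L_0)$ equals the linking number of $L_0$ with its contact pushoff $L_0'$ (the pushoff along the contact framing). In $S^3$ this linking number is computed with respect to the surface longitude $\lambda_s^{old}$ of $L_0$, which is characterized by having linking number zero with $L_0$. After surgery, the homology of the complement changes, so the curve on $\partial(\nu L_0)$ that bounds in $M \setminus \mathring{\nu L_0}$ — i.e.\ the new surface longitude $\lambda_s^{new}$ — is in general a different curve $\lambda_s^{old} + c\,\mu_0$ for some integer $c$. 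Since the contact framing is intrinsic to $\xi$ near $L_0$ and $\xi$ agrees with $\xi_{st}$ there, the contact pushoff $L_0'$ is unchanged as a curve on $\partial(\nu L_0)$; only the reference longitude against which we measure it changes. Hence $\operatorname{tb}_{new} = \operatorname{tb}_{old} - c$, and the whole problem reduces to identifying the integer $c$.

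To compute $c$, first I would note that $\lambda_s^{old}$, viewed as a curve in $M \setminus \mathring{\nu L_0} \subset M$, is homologous in $M$ to $-\sum_{i=1}^n l_{0i}\,\mu_i$ inside the surgered manifold: this is the same computation as in the previous lemma, using that $\operatorname{lk}(\lambda_s^{old}, L_i) = l_{0i}$ for $i\geq 1$ while $\operatorname{lk}(\lambda_s^{old}, L_0)=0$, and that in the complement of $L$ each $L_i$-parallel curve is a multiple of $\mu_i$ after filling. Using the identity $\mathbf{l} = Q\mathbf{a}$ from the previous lemma together with the presentation of $H_1(M)$, one can rewrite the class of $\lambda_s^{old}$ and extract a Seifert surface for $L_0$; the coefficient $c$ is then read off as the intersection number of this Seifert surface with the meridian $\mu_0$, equivalently the amount by which $\lambda_s^{old}$ must be corrected to bound. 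Concretely, pushing the relations $p_i\mu_i + q_i\sum_{j\neq i} l_{ij}\mu_j = 0$ weighted by $a_i$ and subtracting, the $\mu_0$-defect works out to $\langle \mathbf{a}, (q_1 l_{10},\dots,q_n l_{n0})^{T}\rangle$, which is exactly $c$.

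The cleanest way to organize the second step is probably to do the computation directly at the chain level: build an explicit $2$-chain in $M$ bounded by $L_0$ by capping off $\lambda_s^{old}$ with a Seifert surface in $S^3 \setminus \mathring{\nu L}$ (which exists since $\operatorname{lk}(\lambda_s^{old}, L_i)$ need not all vanish — so really one takes a surface with boundary $\lambda_s^{old} \cup \bigl(\text{meridional pieces on each } \partial\nu L_i\bigr)$), then glue in the cores of the $i$-th surgery solid tori with multiplicities dictated by $\mathbf{a}$ so that all the extra boundary on $\partial(\nu L_i)$ for $i\geq 1$ cancels. The leftover boundary on $\partial(\nu L_0)$ is then $\lambda_s^{old}$ plus $c\,\mu_0$ copies of the meridian, and comparing with the genuine new surface longitude $\lambda_s^{new}$ (which by definition has this leftover boundary equal to $L_0$ itself with no meridional correction) pins down $c = \langle \mathbf{a}, (q_1 l_{10},\dots,q_n l_{n0})^{T}\rangle$.

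The main obstacle I expect is bookkeeping the orientations and sign conventions: the direction of the right-handed meridians, the sign in $[L_0] = \sum l_{i0}\mu_i$ versus $\lambda_s \sim -\sum l_{0i}\mu_i$, the orientation of the glued-in cores relative to $\mu_i$, and the fact that the surgery coefficients here are topological (so one has genuinely used $\lambda_s$ and not the contact longitude $\lambda$). Getting a consistent convention so that the final sign comes out as a minus rather than a plus — and matching the known formulas in \cite[Lemma~6.6]{LOSS}, \cite[Lemma~2]{GO}, \cite[Lemma~6.4]{C} — is where care is needed; the topological content, namely "$\operatorname{tb}$ changes by the meridional defect of the old surface longitude", is essentially forced once the homological computation of the previous lemma is in hand.
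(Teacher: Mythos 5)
Your plan is essentially the paper's own proof: the change in $\operatorname{tb}$ is exactly the meridional defect $c$ by which the reference (surface) longitude shifts, since the contact framing is untouched near $L_0$, and $c$ is then extracted from the presentation of $H_1(M\setminus\mathring{\nu L_0})$ by weighting the relations $p_i\mu_i+q_i\sum_{j\neq i}l_{ij}\mu_j=0$ (with the $j=0$ terms included) by the $a_i$, giving $c=\langle\mathbf{a},(q_1l_{10},\dots,q_nl_{n0})^T\rangle$. The only quibble is an intermediate sign: with the paper's conventions $\lambda_s^{old}\sim+\sum_{i=1}^n l_{0i}\mu_i$ (not $-\sum l_{0i}\mu_i$) in the complement, but you already flag the sign bookkeeping as the point needing care, and it does not affect the validity of the approach.
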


\begin{proof}  \hfill  \\
Let $\lambda_s$ be the surface longitude of $L_0$ in $S^3$, i.e. $\operatorname{lk}(L_0,\lambda_s)=0$. And let $\lambda_c$ be the contact longitude of $L_0$ in $(S^3,\xi_{st})$. Then $\operatorname{tb}_{old}$ is given by
\begin{align*}
\lambda_c=\operatorname{tb}_{old}\mu_0+\lambda_s \in H_1(\partial \nu L_0).
\end{align*}
Because the contact longitude is defined by the contact structure along $L_0$ and the contact structure does not change near $L_0$ by doing the surgery along $L$ the contact longitude $\lambda_c$ represents also the contact longitude in $(M,\xi)$. But in general the surface longitude $\lambda_s$ changes. Since the knot $L_0$ is nullhomologous in $M$, there is a unique $f\in\mathbb{Z}$ such that $f\mu_0+\lambda_s=0\in H_1(M\setminus \mathring{\nu L_0};\mathbb{Z})$ (this is the new surface longitude). Then $\operatorname{tb}_{new}$ is given by
\begin{align*}
\lambda_c=\operatorname{tb}_{new}\mu_0+(f\mu_0+\lambda_s) \in H_1(\partial \nu L_0).
\end{align*}
Putting this together leads to
\begin{align*}
\operatorname{tb}_{new}=\operatorname{tb}_{old}-f.
\end{align*}
So the only thing left is to compute $f$. As in the proof of Lemma~\ref{lem:homology} one computes the homology of $M\setminus \mathring{\nu L_0}$ as
\begin{align*}
H_1(M\setminus \mathring{\nu L_0};\mathbb{Z})=\mathbb{Z}_{\mu_0}\oplus\cdots\oplus\mathbb{Z}_{\mu_n}/\langle p_i\mu_i+q_i\sum_{\substack{j=0 \\ j\neq i}}^n l_{ij}\mu_j=0|i=1,\ldots,n\rangle
\end{align*}
and expresses $\lambda_s$ as
\begin{align*}
\lambda_s=\sum_{i=1 }^n l_{i0}\mu_i.
\end{align*}
So $f\mu_0+\lambda_s$ is zero in $H_1(M\setminus \mathring{\nu L_0};\mathbb{Z})$ if and only if there exists integers $b_i\in\mathbb{Z}$, $i=1,\ldots,n$, such that
\begin{align*}
f\mu_0+\sum_{i=1 }^n l_{i0}\mu_i&= \sum_{i=1}^n b_i  \big(p_i\mu_i+q_i\sum_{\substack{j=0 \\ j\neq i}}^n l_{ij}\mu_j\big)\\
&= \big(\sum_{j=1 }^n b_j q_j l_{j0}\big) \mu_0 + \sum_{i=1}^n \big(b_i  p_i+\sum_{\substack{j=1 \\ j\neq i}}^n q_j l_{ij}b_j\big)\mu_i.
\end{align*}
This is equivalent to the existence of a vector $\mathbf{b}\in\mathbb{Z}^n$ such that
\begin{align*}
\mathbf{l}&=Q\mathbf{b}\,\,\text{and}\\
f&=\sum_{j=1 }^n b_j q_j l_{j0}.
\end{align*}
If one chooses for $\mathbf{b}$ a solution $\mathbf{a}$ from Lemma~\ref{lem:homology} the formula follows.
\end{proof}

\begin{ex} [Computing  $\operatorname{tb}$ with this formulas]  \label{ex:proof}
(1) Consider the surgery diagram from Figure~\ref{fig:computing-tb-ex1} (i). The old Thurston--Bennequin invariant of $L_0$ is $-1$. And because the surgery along the unknot $L_1$ with topological framing $1/n$ leads again to $S^3$ the knot $L_0$ is again nullhomologous in the new manifold. This can be checked also with the formulas from Lemma~\ref{lem:homology} and~\ref{lem:tb}. For $\mathbf{a}$ one gets the equation $1=l_{10}=Qa_1=p_1 a_1=a_1$. And therefore $tb_{new}=-1-l_{10}q_1a_1=-1-n$. Observe again that this result does not depend on the explicit contact structure chosen for the surgery.\\
(2) Consider the surgery diagram from Figure~\ref{fig:computing-tb-ex1} (ii). Again the surgery leads to $S^3$ so the resulting knot is again nullhomologous. The new Thurston--Bennequin invariant can be computed as follows.
For $\mathbf{a}$ one gets the equation $2=l_{10}=Qa_1=p_1 a_1=a_1$. And therefore $tb_{new}=-1-l_{10}q_1a_1=-1-4n$.

In the next section, we will see that Theorem~\ref{thm:contact2} is an easy corollary out of these two examples.\\
(3) There are also examples where the solution $a$ of $\mathbf{l}=M\mathbf{a}$ is not unique. But the result of $tb_{new}$, of course, is not affected by this. For example, consider the surgery diagram from Figure~\ref{fig:computing-tb-ex3} (i). First one has to check if the knot $L_0$ is again nullhomologous in the surgered manifold. For this, one has to look if there exists a solution $\mathbf{a}\in\mathbb{Z}$ of 
\begin{align*}
\begin{pmatrix}
 1\\
2
 \end{pmatrix}=\mathbf{l}=Q\mathbf{a}=\begin{pmatrix}
 1&1\\
2&2
 \end{pmatrix}\begin{pmatrix}
 a_1\\
a_2
 \end{pmatrix}.
\end{align*}
Two obvious solutions are
\begin{align*}
\mathbf{a}=\begin{pmatrix}
 1\\
0
 \end{pmatrix}\,\,\text{or }\,\,\mathbf{a}=\begin{pmatrix}
 0\\
1
 \end{pmatrix}.
\end{align*}
So the new knot is nullhomologous and the new Thurston--Bennequin invariant can be computed out of the old one as follows
\begin{align*}
tb_{new}=tb_{old}-a_1q_1l_{10}-a_2q_2l_{20}=tb_{old}-2a_1-2a_2=tb_{old}-2.
\end{align*}
Observe that this result does not depend on the choice of $\mathbf{a}$.
\end{ex}

\begin{figure}[htbp] 
\centering
\def\svgwidth{0.9\columnwidth}
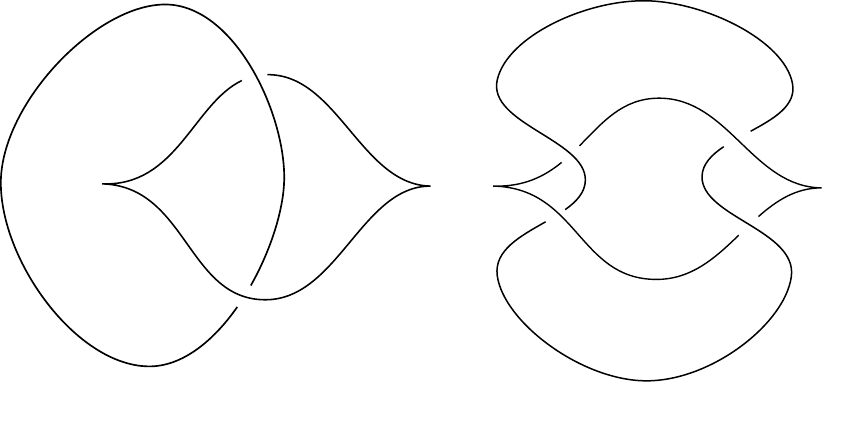
\caption{Computing $\operatorname{tb}$-invariants in surgery diagrams}
\label{fig:computing-tb-ex1}
\end{figure}


\subsection{Rationally Nullhomologous Knots}\hfill  \\
\label{subsection:computtingrationallytb}
These results can be easily generalized to rationally nullhomologous knots. A knot $L_0$ in $M$ is called \textbf{rationally nullhomologous} if there exists a natural number $k\in\mathbb{N}$ such that $k[L_0]=0\in H_1(M;\mathbb{Z})$. For rationally nullhomologous Legendrian knots in contact $3$-manifolds one can generalize the Thurston--Bennequin invariant to the so-called \textbf{rational Thurston--Bennequin invariant} $\operatorname{tb}_\mathbb{Q}$ (see for example~\cite[Definition~6.2]{BG} or~\cite[Section~2~and~3]{GO}, for the fact that this is in general well defined see~\cite[Section~5]{DKK}). 

With the same notation as from the first sections, one gets the following generalizations of these results.

\begin{lem} [Computing the rationally Thurston--Bennequin invariant] 
(1) $L_0$ is rationally nullhomologous in $M$ if and only if there exists a natural number $k\in\mathbb{N}$ and a vector $\mathbf{a}\in\mathbb{Z}^n$ such that $k\mathbf{l}=Q\mathbf{a}$.\\
(2) If $L_0$ is rationally nullhomologous in $M$ then one can compute the new rational Thurston--Bennequin invariant $\operatorname{tb}_{\mathbb{Q}, new}$ of $L_0$ in $(M,\xi)$ from the old one $\operatorname{tb}_{old}$ of $L_0$ in $(S^3,\xi_{st})$ as follows
\begin{align*}
\operatorname{tb}_{\mathbb{Q}, new}=\operatorname{tb}_{old}-\frac{1}{k}\sum_{i=1}^n{a_i q_i l_{i0}}.
\end{align*}
\end{lem}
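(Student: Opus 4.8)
The plan is to run, for both statements, the same homology computations as in the proofs of the two preceding lemmas, with $[L_0]$ replaced by $k[L_0]$ and $\lambda_s$ by $k\lambda_s$.

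For part (1) I would recall the presentation of $H_1(M;\mathbb{Z})$ used in the first lemma (generators $\mu_1,\dots,\mu_n$, relations $p_i\mu_i+q_i\sum_{j\neq i}l_{ij}\mu_j=0$ with $j$ ranging over $1,\dots,n$) together with $[L_0]=\sum_{i=1}^n l_{i0}\mu_i$. Then $k[L_0]=0$ in $H_1(M;\mathbb{Z})$ precisely when $k\sum_i l_{i0}\mu_i$ is an integral combination $\sum_i a_i\big(p_i\mu_i+q_i\sum_{j\neq i}l_{ij}\mu_j\big)$ of the relations; comparing the coefficient of each $\mu_m$ and rewriting in vector form, exactly as in the first lemma, this is the condition $k\mathbf{l}=Q\mathbf{a}$ for some $\mathbf{a}\in\mathbb{Z}^n$.

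For part (2) I would first recall that $\operatorname{tb}_{\mathbb{Q}, new}$ is characterised by $\lambda_c=\operatorname{tb}_{\mathbb{Q}, new}\,\mu_0+\lambda_{\mathbb{Q}}$ in $H_1(\partial\nu L_0;\mathbb{Q})$, where $\lambda_c$ is the contact longitude of $L_0$ and $\lambda_{\mathbb{Q}}$ is the rational longitude, i.e.\ the unique slope on $\partial\nu L_0$ that is rationally nullhomologous in $M\setminus\mathring{\nu L_0}$. As in the integral case, $\lambda_c$ is unchanged by the surgery along $L$ (the contact structure is untouched near $L_0$) and $\operatorname{tb}_{old}$ is given by $\lambda_c=\operatorname{tb}_{old}\,\mu_0+\lambda_s$ with $\lambda_s$ the surface longitude in $S^3$. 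Writing $\lambda_{\mathbb{Q}}=\lambda_s+\tfrac{f}{k}\mu_0$, equivalently $k\lambda_s+f\mu_0=0$ in $H_1(M\setminus\mathring{\nu L_0};\mathbb{Z})$, these two identities give $\operatorname{tb}_{\mathbb{Q}, new}=\operatorname{tb}_{old}-\tfrac{f}{k}$, so it remains to pin down $f$. For that I would use the presentation of $H_1(M\setminus\mathring{\nu L_0};\mathbb{Z})$ and the expression $\lambda_s=\sum_i l_{i0}\mu_i$ from the second lemma, expand $k\lambda_s+f\mu_0=\sum_i b_i\big(p_i\mu_i+q_i\sum_{j\neq i,\,j\geq 0}l_{ij}\mu_j\big)$, and compare coefficients: the $\mu_m$-coefficients for $m\geq 1$ give $k\mathbf{l}=Q\mathbf{b}$ and the $\mu_0$-coefficient gives $f=\langle\mathbf{b},(q_i l_{i0})_i\rangle$; taking $\mathbf{b}=\mathbf{a}$ a solution of part (1) produces the stated formula.

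The step needing the most care is the rational-longitude bookkeeping in part (2): one must be sure that any solution $(k,\mathbf{a})$ of $k\mathbf{l}=Q\mathbf{a}$ really yields $\lambda_{\mathbb{Q}}=\lambda_s+\tfrac{1}{k}\langle\mathbf{a},(q_il_{i0})_i\rangle\mu_0$. This in fact falls out of the coefficient comparison, which shows $k\lambda_s+f\mu_0$ equals a genuine sum of relations and hence vanishes exactly in $H_1(M\setminus\mathring{\nu L_0};\mathbb{Z})$; and the value $\tfrac{f}{k}$ is independent of the chosen $(k,\mathbf{a})$ because for a rationally nullhomologous knot $\mu_0$ has infinite order in $H_1(M\setminus\mathring{\nu L_0};\mathbb{Q})$ (consistent with the well-definedness of $\operatorname{tb}_{\mathbb{Q}}$ recalled above). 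Finally note that $L_0\subset S^3$ is honestly nullhomologous, so $\operatorname{tb}_{old}\in\mathbb{Z}$ and the ``old'' side needs no modification.
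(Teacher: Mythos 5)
Your proposal is correct and follows essentially the same route as the paper: part (1) is the first lemma's coefficient comparison with $[L_0]=0$ replaced by $k[L_0]=0$, and part (2) is the paper's relation $k\lambda_s+k\operatorname{tb}_{old}\mu_0=k\lambda_c=k\operatorname{tb}_{\mathbb{Q},new}\mu_0+(f\mu_0+k\lambda_s)$, with $f$ computed from the presentation of $H_1(M\setminus\mathring{\nu L_0};\mathbb{Z})$ exactly as in the integral case. Your extra remarks on the well-definedness of $f/k$ (via the infinite order of $\mu_0$ in the exterior) only make explicit a point the paper leaves implicit.
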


\begin{proof}  \hfill  \\
The proofs are similar to the ones in the foregoing subsections. For the first part one has to change the condition $[L_0]=0$ to $k[L_0]=0$ and do the same computations again.

The second part works similarly. If $L_0$ is only rationally nullhomologous in $M$, then the Thurston--Bennequin invariants are related as follows (see also~\cite[Proof of Lemma~2]{GO}).
\begin{align*}
k\lambda_s+k\operatorname{tb}_{old}\mu_0=k\lambda_c=k\operatorname{tb}_{\mathbb{Q}, new}\mu_0+(f\mu_0+k\lambda_s).
\end{align*}
Then same computations as in the first part lead to the result.
\end{proof}

\begin{ex} [Computing $\operatorname{tb}$ of rationally Legendrian unknots in lens spaces]
This formula for computing the rational Thurston--Bennequin invariant is very useful to calculate $\operatorname{tb}_\mathbb{Q}$ in lens spaces. For example, consider the surgery diagram from Figure~\ref{fig:computing-tb-ex3} (ii). The $(-p/q)$-surgery along $L_1$ leads to the lens space $L(p,q)$. To check if the knot $L_0$ is nullhomologous one has to solve the equation $1=\mathbf{l}=Q\mathbf{a}=-pa_1$. For $p\neq1$ this equation has no solution in $\mathbb{Z}$ and therefore $L_0$ is not nullhomologous in the surgered manifold. But for $p\neq0$ the equation $k=k\mathbf{l}=Q\mathbf{a}=-pa_1$ has a solution, for example $k=p$ and $a_1=-1$. So $L_0$ is rationally nullhomologous in $L(p,q)$. The rationally Thurston--Bennequin invariant is computed as follows
\begin{align*}
tb_{\mathbb{Q},new}=tb_{old}-\frac{1}{k}a_1l_{01}q_1=tb_{old}+\frac{q}{p}.
\end{align*}
Observe again that the result is independent of the chosen solution $\mathbf{a}$ and independent of the chosen contact structure on the new glued-in solid torus. 
\begin{figure}[htbp] 
\centering
\def\svgwidth{0.7\columnwidth}
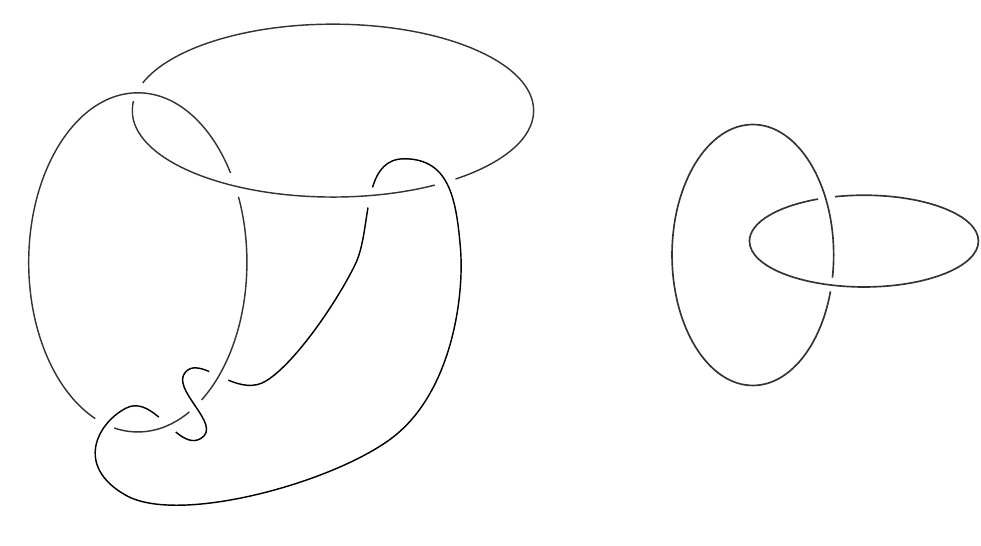
\caption{Computing rationally $\operatorname{tb}$-invariants in surgery diagrams}
\label{fig:computing-tb-ex3}
\end{figure} 
\end{ex}


\subsection{Extension to surgeries on general manifolds}\hfill  \\
One can also study the same problem for a surgery in a general contact manifold (not on $(S^3,\xi_{st})$). This is motivated by~\cite[Lemma~6.4]{C}.

Consider now $L=L_1\sqcup\cdots\sqcup L_n\subset N^3$ an oriented nullhomologous link in some contact $3$-manifold $(N,\xi_N)$. Denote by $(M,\xi)$ some result of contact surgery along $L$ and by $L_0\subset (N\setminus \mathring{\nu L},\xi_N)$ an oriented nullhomologous Legendrian knot in $(N,\xi_N)$ and $(M,\xi)$, depending on the context. Then one gets exactly the same formulas as before. The only part changing in the proof is that the homologies are different, for example
\begin{align*}
H_1(M;\mathbb{Z})=H_1(N;\mathbb{Z})\oplus\mathbb{Z}_{\mu_1}\oplus\cdots\oplus\mathbb{Z}_{\mu_n}/\langle p_i\mu_i+q_i\sum_{\substack{j\neq i}} l_{ij}\mu_j=0|i=1,\ldots,n\rangle.
\end{align*}
For more details see~\cite[Proof of Lemma~6.4]{C}.

If one has a surgery diagram with also $1$-handles included, then one can use the above methods as well. The first possibility is to change all $1$-handles into topological $0$-surgeries along unknots and the second possibility is to think of the surgery diagram as a surgery diagram in $(\#_n S^1\times S^2,\xi_{st})$ (represented by $n$ $1$-handles) and then use the above extension.


\section{Proof of Theorem \ref{thm:contact2}}	
\label{section:appendix} 

Let $K$ be a Legendrian knot in $(S^3,\xi_{st})$ such that $(S^3_K(r),\xi_K(r))$ is again a contact $S^3$ (with any contact structure) for some $r\neq \mu$. From Theorem~\ref{thm:gordon-luecke2} it follows, that $K$ is topologically equivalent to an unknot $U$. (So we will write $U$ instead of $K$.) In Example~\ref{ex:surgeryUnknot} and Remark~\ref{rem:hom} we explained that the topological surgery coefficient (with respect to the surface longitude $\lambda_s$) has to be of the form $1/q$, for $q\in\Z$. 

Now we want to show that every resulting contact structure $\xi_U(r)$ is overtwisted if $U$ is not coarse equivalent to an unknot of the form $U_n$. To do this we will show that in the resulting contact $3$-spheres there exist Legendrian knots that cannot be realized in $\xist$. 

For this, one considers Examples~\ref{ex:proof} (1) and (2). By doing a Rolfsen twist along $U$ one sees that the Legendrian knot $L_0$ from Examples~\ref{ex:proof} (1) remains an unknot in the new surgered manifold. For $q<0$ one gets $\operatorname{tb}_{new}=-q-1>0$ (independent of the choice of the contact structure on the new glued-in solid torus). According to the Bennequin inequality, this knot cannot lie in a tight contact structure. So for $q<0$ it is not possible to have a non-trivial contact surgery from $(S^3,\xi_{st})$ to itself.
\begin{figure}[htbp] 
\centering
\def\svgwidth{0.9\columnwidth}
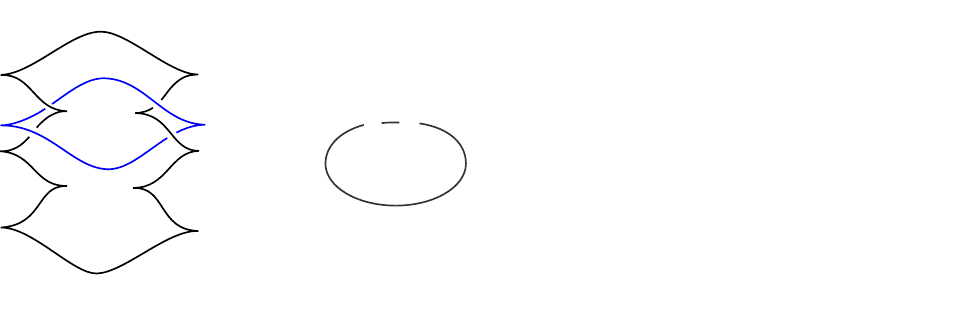
\caption{An unknot $L_0$ that becomes a negative $(2,2q+1)$-torus knot after surgery}
\label{fig:proofofcontactsurgerytheorem21}
\end{figure}

For $q>0$ one looks at Examples~\ref{ex:proof} (2). In Figure~\ref{fig:proofofcontactsurgerytheorem21} it is shown (again by doing a Rolfsen twist) that $L_0$ becomes in the new surgered manifold a negative $(2,2q+1)$-torus knot $T_{2,2q+1}$. In~\cite[Section~1]{EH} it is proven that the maximal Thurston--Bennequin invariant of such a knot in $(S^3,\xi_{st})$ is given by $-2-4q$, which is smaller than $\operatorname{tb}_{new}=-1-4q$. For $U$ not coarse equivalent to a Legendrian unknot of the form $U_n$ this example can be realized as a contact surgery along a Legendrian knot, which proves the result.
\hfill$\square$


\section{A contact Rolfsen twist and counterexamples to the Legendrian link exterior problem}	
\label{counterexample}

The next natural question is if there exist (like in the topological case) Legendrian links not determined by the contactomorphism type of their exteriors. If one wants to generalize Example~\ref{ex:Whitehead} one needs a contact analogon of a Rolfsen twist. In Example~\ref{ex:unique} we gave a contact surgery from $(S^3,\xi_{st})$ to $(S^3,\xi_{st})$. Topologically this surgery represents a $(+1)$-Dehn surgery along an unknot. Deleting such components from contact surgery diagrams leads to a contact analogon of a $(-1)$-Rolfsen twist. But it is not clear how the rest of the diagram changes then.

\begin{lem}[A contact Rolfsen twist]\label{lem:rolfsentwist}
$(a)$ The two contact surgery diagrams shown in the upper row of Figure~\ref{fig:rolfsentwist} represent contactomorphic contact manifolds. \\
$(b)$ The same is true for the two surgery diagrams in the lower row of Figure~\ref{fig:rolfsentwist}.\\
$(c)$ It follows, that the non-unique contact surgery diagram on the left of Figure~\ref{fig:rolfsentwist} is contactomorphic to exactly one contact surgery diagram on the right of the same figure.
\end{lem}

\begin{figure}[htbp] 
\centering
\def\svgwidth{0.98\columnwidth}
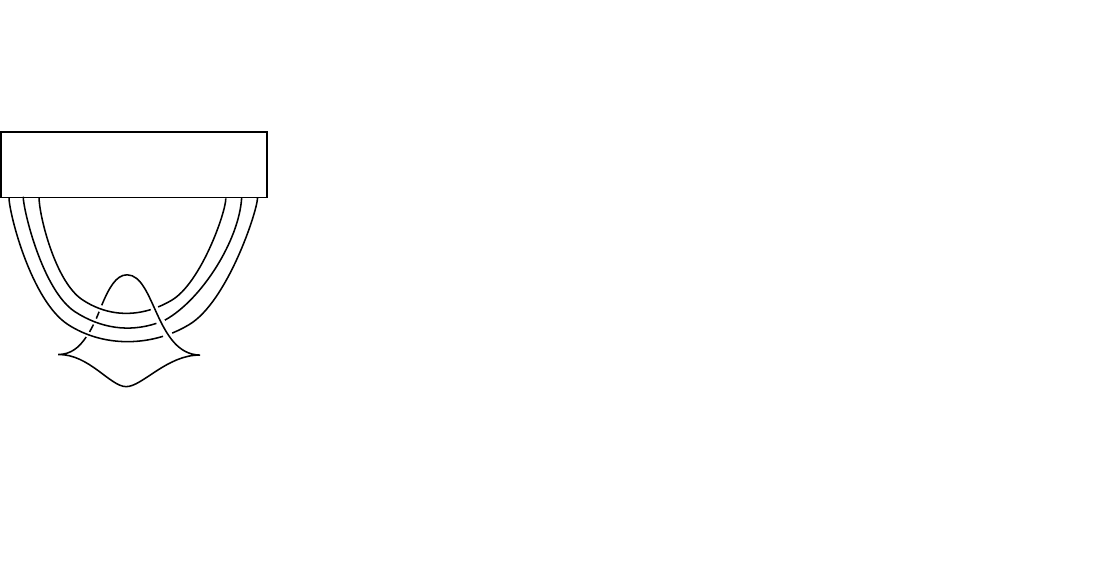
\caption{A contact Rolfsen twist. The box $L$ represents an arbitrary Legendrian link $L$ with all contact surgery coefficients equal to $\pm1$.}
\label{fig:rolfsentwist}
\end{figure} 

\begin{rem}[Doing $n$-fold contact Rolfsen twists]
Of course one can do such a contact Rolfsen twist more than once, this corresponds to doing $(+2)$-contact Dehn surgeries along $n$ disjoint copies of the Legendrian unknot. By translating the contact surgery diagram into an open book as in the following proof one can show that this is the same as doing a single contact Dehn surgery along the unknot with contact surgery coefficient $1+\frac{1}{n}$.
\end{rem}

\begin{proof}[Proof of Lemma~\ref{lem:rolfsentwist}]\hfill\\
We prove part $(b)$. Part $(a)$ works similar and $(c)$ follows then together with Example~\ref{ex:unique}.

First, one observes that one can put the whole Legendrian link $L$ together with the Legendrian unknot $U$ on the pages of an abstract open book for $(S^3,\xi_{st})$. The rough idea is to choose a fine enough $CW$-decomposition of $S^3$ such that the $1$-skeleton is a Legendrian graph containing the Legendrian link. Then the positive and the negative transverse push-off of this Legendrian graph represents the binding of the open book. A page is given by a surface bounded by the transverse push-offs and containing the $1$-skeleton. Consequently, this open book decomposition contains $L$ in its pages (for details see for example~\cite[Section~2.3]{A}).
\begin{figure}[htbp] 
\centering
\def\svgwidth{0.85\columnwidth}
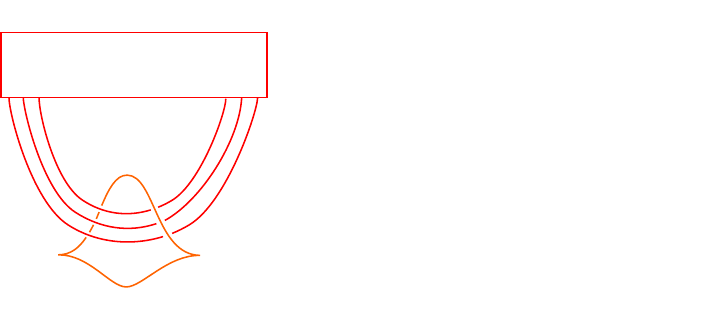
\caption{Putting $L$ and $U$ on an open book for $(S^3,\xi_{st})$}
\label{fig:rolfsenopenbook}
\end{figure} 

The method in~\cite{A} is very effective to construct an open book of $(S^3,\xi_{st})$ with a given Legendrian link on its pages. In the proof of Theorem~5.5 and Lemma~5.8. in~\cite{A} an open book for $(S^3,\xist)$ with the Legendrian knot $L$ together with the Legendrian unknot $U$ on its page is explicitly constructed. Here we are only interested in contact manifolds up to contactomorphism (rather than isotopy), therefore, it is enough to consider abstract open books (instead of embedded open books). A part of the open book (seen as an abstract open book) constructed in~\cite[Proof of Theorem~5.5 and Lemma~5.8.]{A} is shown in Figure~\ref{fig:rolfsenopenbook}. The page is pictured in gray and a part of the monodromy is described as a right-handed Dehn twist along the blue curve. One has to read the red curve in the open book, as necessarily many parallel copies.

Next, one constructs from this an abstract open book for the surgered manifold as explained in~\cite{O}. First, one puts the Legendrian link $L$ together with the Legendrian unknots $U_1$ and $U_2$ from the $(\pm1)$-surgery diagram on the page of an abstract open book for $(S^3,\xi_{st})$. This is shown in Figure~\ref{fig:rolfsenproofneu} on the upper right side. The additional stabilization of the Legendrian unknot $U_2$ corresponds to a stabilization of the open book as shown in~\cite[Figure 1 and 2]{O}.
\begin{figure}[htbp] 
\centering
\def\svgwidth{0.98\columnwidth}
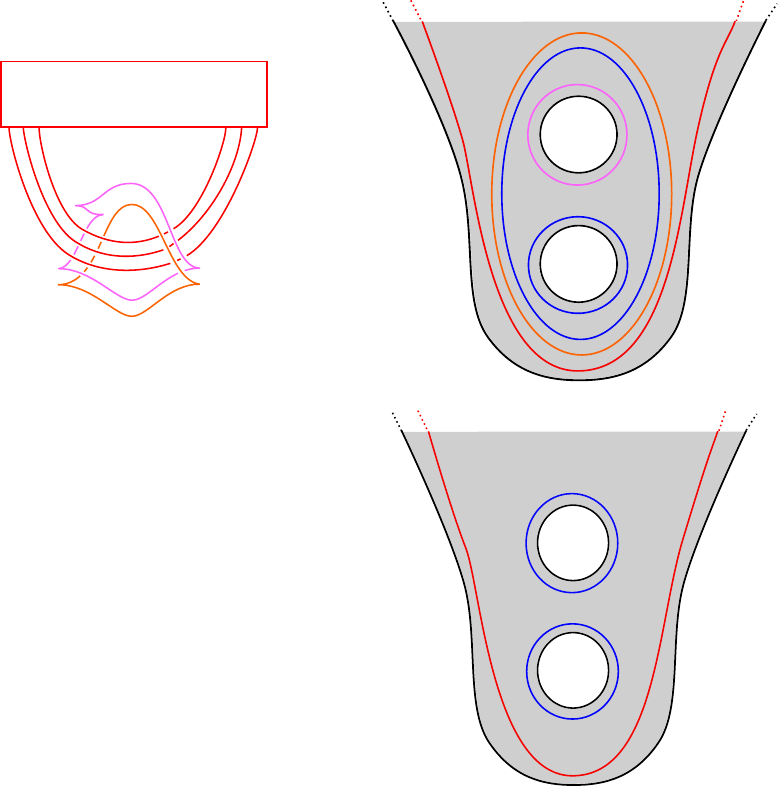
\caption{Proof of the contact Rolfsen twists via open book decompositions}
\label{fig:rolfsenproofneu}
\end{figure} 

The monodromy of the open book for $(S^3,\xi_{st})$ is given by right-handed Dehn twists along the blue curves. The monodromy of the surgered manifold is obtained by composing this old monodromy with right-handed Dehn twists along the $(-1)$-surgery knot and left-handed Dehn twists along the $(+1)$-surgery knot (see~\cite[Proposition 8]{O}).

Two Dehn twists cancel each other and one gets the open book in the bottom right corner of Figure~\ref{fig:rolfsenproofneu}. That open book represents the surgery diagram of the once stabilized Legendrian link $L$ shown in the bottom left corner of Figure~\ref{fig:rolfsenproofneu}.

Observe also that the topological surgery coefficients of $L$ change exactly as prescribed by the topological Rolfsen twists.
\end{proof}

\begin{ex} [Counterexamples to the Legendrian link exterior problem]\label{ex:counterlink}
With this contact Rolfsen twist, one can give counterexamples to the Legendrian link exterior problem. But it is not as easy as in the topological setting. The main point there was that the new glued-in solid tori is again a tubular neighborhood of the spine of this torus. This is not the case in the contact setting. 

To see this, consider a Legendrian knot $K$ in a contact $3$-manifold $(M,\xi)$ and the result of contact surgery along this knot $(M_K(r),\xi_K(r))$. Then the slope of the new glued-in solid torus is $r$. But if this new glued-in solid torus would be the standard neighborhood of a Legendrian knot then the slope would be of the form $\lambda+n\mu$. So for general $r$ this is not the case.

Therefore, one looks at the contact surgery diagram with one $(+1)$-surgery along $U_1$ and one $(-1)$-surgery along $U_2$. The new glued-in solid tori are again in a canonical way standard neighborhoods of the Legendrian spines $U_1'$ and $U_2'$. From the proof of Lemma~\ref{lem:rolfsentwist} it follows that this spines $U_1'$ and $U_2'$ lie in the resulting surgery diagram as shown in Figure~\ref{fig:rolfsentwistmitseele}.
\begin{figure}[htbp]  
\centering
\def\svgwidth{0.96\columnwidth}
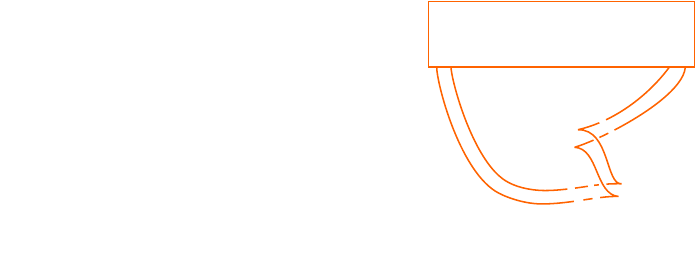
\caption{The spines of the new glued in solid tori}
\label{fig:rolfsentwistmitseele}
\end{figure} \hfill\\
(1) Consider the two Legendrian links $L\sqcup U_1\sqcup U_2$ and $L'\sqcup U_1'\sqcup U_2'$ in $(S^3,\xi_{st})$ as depicted in Figure~\ref{fig:counterexample1}. These two links are not equivalent because their triples of $\operatorname{tb}$-invariants are different: $\operatorname{tb}(L\sqcup U_1\sqcup U_2)=(-1,-1,-2)$; $\operatorname{tb}(L'\sqcup U_1'\sqcup U_2')=(-2,-2,-1)$. 

But their exteriors are contactomorphic, as one can see as follows. One does a $(+1)$-surgery along $U_1$ and a $(-1)$-surgery along $U_2$. The resulting manifold is again $(S^3,\xi_{st})$, in which the Legendrian knot $L$ looks now like $L'$ in Figure~\ref{fig:counterexample1} on the right. So in the exteriors of $U_1\sqcup U_2$ and $U_1'\sqcup U_2'$ the Legendrian knots $L$ and $L'$ are the same.
\begin{figure}[htbp] 
\centering
\def\svgwidth{0.96\columnwidth}
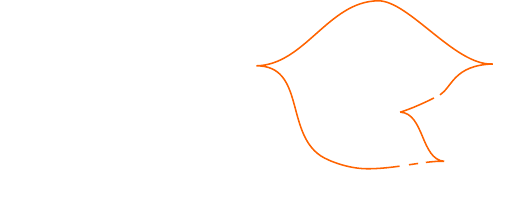
\caption{Two different Legendrian links with contactomorphic exteriors}
\label{fig:counterexample1}
\end{figure} \hfill\\
(2) One can also get examples with different topological types and the same Thurs\-ton--Bennequin invariants. For that consider the Legendrian links in Figure~\ref{fig:counterexample2} similar to the Whitehead links as in Example~\ref{ex:Whitehead}. In the left Legendrian link in Figure~\ref{fig:counterexample2} all three knots are unknots, but in the right link the knot $L'$ is non-trivial, so they cannot be equivalent. But their exteriors are contactomorphic with the same argument as in the foregoing example. 
\begin{figure}[htbp] 
\centering
\def\svgwidth{0.99\columnwidth}
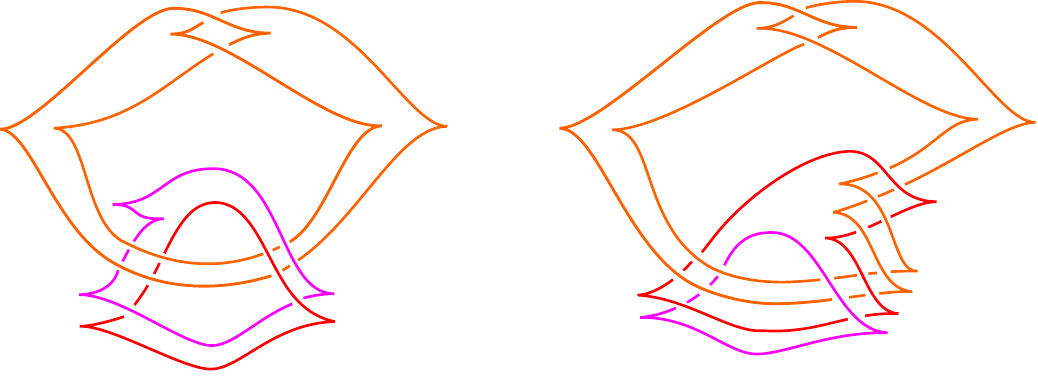
\caption{Two different Legendrian links with contactomorphic exteriors}
\label{fig:counterexample2}
\end{figure} \hfill\\
(3) By doing contact Dehn surgeries corresponding to an $n$-fold Rolfsen twist, for $n>0$, one gets in both foregoing examples infinitely many different pairs of Legendrian links such that each pair has contactomorphic exteriors. But it is not clear if there also exist infinitely many different Legendrian links whose exteriors are all contactomorphic to each other.
\end{ex}


	\section{The knot complement problem in general manifolds}	
	\label{topgenmfd} 
Instead of looking at Legendrian knots in $(S^3,\xi_{st})$ one can also look at Legendrian knots in general contact $3$-manifolds and study the Legendrian knot exterior problem for these knots. Before studying the contact case in the next section, we first recall some basic facts about the knot complement problem in general manifolds in the topological setting. 

The following lemma whose proof is similar as the one of Theorem~\ref{thm:gordon-luecke1} transfers the knot complement problem in an arbitrary manifold to a problem concerning Dehn surgery.

\begin{lem}[Criterion for the knot complement problem]\label{lem:crit}
Let $K$ be a knot in a $3$-manifold $M$, such that there is no non-trivial Dehn surgery along $K$ resulting again in $M$. Then the equivalence type of $K$ is determined by the diffeomorphism type of its complement.
\end{lem}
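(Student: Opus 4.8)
The plan is to transcribe the proof of the Knot Complement Theorem~\ref{thm:gordon-luecke1} from Section~\ref{section:proof}, substituting the hypothesis of the Lemma for the Surgery Theorem~\ref{thm:gordon-luecke2}. Let $K'$ be another knot in $M$ whose complement is diffeomorphic to that of $K$; passing to exteriors (equivalently, complements, as recalled in Section~\ref{section:complement}) we are handed a diffeomorphism $h\colon M\setminus\mathring{\nu K'}\to M\setminus\mathring{\nu K}$. The goal is to exhibit a diffeomorphism of $M$ carrying $K'$ to $K$, that is, to show $K'\sim K$.

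First I would realise $M$ as the trivial Dehn surgery along $K'$, namely $M\cong M_{K'}(\mu')=(S^1\times D^2)\cup_{\varphi'}(M\setminus\mathring{\nu K'})$ with $\varphi'(\mu_0)=\mu'$; here the core $S^1\times\{0\}$ of the glued-in solid torus is isotopic to $K'$, since the gluing map sends meridian to meridian and hence extends over the solid tori. Reading this construction the other way, I would then perform Dehn surgery along $K$ with the slope $r:=h(\mu')\subset\partial(\nu K)$, i.e.\ glue $S^1\times D^2$ to $M\setminus\mathring{\nu K}$ via $h\circ\varphi'$. Exactly as in the diagram of Section~\ref{section:proof}, the identity on the $S^1\times D^2$ factor and $h$ on the exterior factor agree on the gluing region, so they fit together to a diffeomorphism $f\colon M_{K'}(\mu')\to M_K(r)$ which restricts to the identity on $S^1\times D^2$. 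In particular $M_K(r)\cong M_{K'}(\mu')\cong M$.

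Now I invoke the hypothesis on $K$: since no non-trivial Dehn surgery along $K$ produces $M$ again, the diffeomorphism $M_K(r)\cong M$ forces $r=\mu$. Hence $M_K(r)=M_K(\mu)$ is the trivial surgery along $K$, and the core of its glued-in solid torus is isotopic to $K$. But $f$ is the identity on the $S^1\times D^2$ factor, so it sends this core to the core of the solid torus in $M_{K'}(\mu')$, which is isotopic to $K'$. Composing $f$ with the identifications $M\cong M_{K'}(\mu')$ and $M_K(\mu)\cong M$ therefore yields a diffeomorphism of $M$ taking $K'$ to $K$, i.e.\ $K'\sim K$.

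The point requiring care — and dictating the order above — is that the surgery hypothesis is assumed for $K$ only, not for the comparison knot $K'$, so one must arrange to recover $M$ by a surgery along $K$, doing the trivial surgery on $K'$ first. In contrast to the Gordon--Luecke argument there is then no second case to handle: the hypothesis delivers $r=\mu$ directly, with no alternative in which $K'$ is an unknot, so no symmetrisation step is needed. Finally, the oriented version of the statement follows by the same argument, provided ``no non-trivial Dehn surgery'' is read with orientation-preserving diffeomorphisms, just as in the remark following the proof in Section~\ref{section:proof}.
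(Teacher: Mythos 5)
Your argument is correct and is exactly the approach the paper intends: it transplants the surgery-diagram proof of Theorem~\ref{thm:gordon-luecke1}, replacing the Surgery Theorem~\ref{thm:gordon-luecke2} by the hypothesis on $K$, and you rightly note the one asymmetry — the trivial surgery must be performed on the comparison knot so that the reglued surgery is along $K$, which also removes the need for the symmetrisation step of the Gordon--Luecke case. No gaps; the gluing compatibility of $\operatorname{Id}$ and $h$, the identification of the cores under trivial surgery, and the oriented variant are all handled as in Section~\ref{section:proof}.
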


However, note that in general manifolds there exist knots $K$ such that there are non-trivial Dehn surgeries along $K$ not changing the manifold. Also, the equivalence type of a knot, in general, is not determined by the diffeomorphism type of its complement. To see this, consider the following example (see also~\cite{Ro}).

\begin{ex}  [Two non-equivalent knots with the same complements]\label{ex:countergeneral}
Consider two different Dehn surgeries along the unknot $U$ in $S^3$ with the (topological) surgery coefficients $r_1=-5/2$ and $r_2=-5/3$, leading to the lens spaces $L(5,2)$ and $L(5,3)$. By the classification of lens spaces~\cite[Exercise~5.3.8.~(b)]{GS} these two lens spaces are orientation preserving homeomorphic and the homeomorphism is given by interchanging the two solid tori (see also~\cite[Section~2]{GO}).

From this Dehn surgery example, it is easy to find two non-equivalent knots with the same exteriors. For this write  
\\
\hfill\\
\begin{xy}
(0,5)*+{L(5,2)}="a";(10,5)*+{\cong}="a1"; (24,5)*+{S^3_{U}  (r_i)}="b"; (39,5)*+{:=}="b1"; (52,5)*+{S^1\times D^2}="c"; (70,5)*+{+}="d"; (90,5)*+{S^3\setminus\mathring{\nu U}}="e";(104,5)*+{\big/_\sim}="e1";%
(52,0)*+{\mu_0}="f"; (89,0)*+{r_i}="g";%
{\ar@{|->} "f";"g"};
\end{xy}\\
\hfill\\
and consider the knots 
\begin{equation*}
K_i:=S^1\times \{0\}\subset S^1\times D^2  \subset L(5,2).
\end{equation*}
The knots $K_1$ and $K_2$ given as the spines of the new glued-in solid tori represents the spines of the genus-1 Heegaard splitting of $L(5,2)$. As tubular neighborhood of $K_i$ one chooses the whole new glued-in solid tori $S^1\times D^2$, therefore the exterior is in both cases $S^3\setminus\mathring{\nu U}$. It remains to show that these two knots are not equivalent. Therefore, assume that there is an orientation-preserving homeomorphism
\[\begin{array}{rccc}
f\colon& L(5,2)&\longrightarrow& L(5,2)\\
	&K_1 &\longmapsto& K_2.
\end{array}\]
By restricting $f$ to the complementary solid tori
\begin{equation*}
L(5,2)\setminus \mathring{\nu K_i}=S^3\setminus\mathring{\nu U}=T_2
\end{equation*}
one gets a homeomorphism
\[\begin{array}{ccc}
	 T_2&\longrightarrow& T_2\\
	r_1 &\longmapsto& r_2,
\end{array}\]
which sends the slope $r_1=-5\lambda_2+2\mu_2$ to the slope $r_2=-5\lambda_2+3\mu_2$. But such a map cannot exist because all orientation-preserving homeomorphisms of solid tori are isotopic to Dehn twists along meridians. So $K_1$ is not orientation preserving equivalent to $K_2$ in $L(5,2)$.

With the same methods as above, it is easy to show (see~\cite{Ro}) that if $K_1$ and $K_2$ are the cores of the two solid tori in the standard Heegaard splitting of $L(p,q)$, then they have homeomorphic complements, but there is an orientation-preserving (reversing) homeomorphism of $L(p,q)$ sending $K_1$ to $K_2$ if and only if $q^2\equiv 1 \, (\operatorname{mod}\,p)$ ($q^2\equiv -1 \, (\operatorname{mod}\,p)$).
\end{ex}

The key point in the foregoing example is that there is a so-called \textbf{exotic cosmetic surgery}, that means two surgeries along the same knot resulting in the same manifold but with different slopes, such that there is no homeomorphism of the knot exterior mapping one slope to the other (see~\cite{BHW}). One can show that every knot in a given $3$-manifold is determined by its complement if and only if this manifold cannot be obtained by exotic cosmetic surgery from another manifold.

\begin{thm}[Exotic cosmetic surgeries]\label{thm:cosmetic}
Let $K$ be a knot in a closed $3$-manifold $M$. The following two claims are equivalent.
\begin{enumerate}
	\item The equivalence type of $K$ in $M$ is determined by the homeomorphism type of its exterior $M\setminus \mathring{\nu K}$.
	\item For any knot $K'$ in any $3$-manifold $M'$ such that $M'_{K'}(r_1)$ and $M'_{K'}(r_2)$ are both homeomorphic to $M$ (for $r_1\neq r_2$) and $K$ is in both cases given as the spine of the newly glued-in solid torus there exists a homeomorphism of the knot exterior
	\[\begin{array}{rccc}
	 h\colon& M'\setminus\mathring{\nu K'}&\longrightarrow& M'\setminus\mathring{\nu K'}\\
	&r_1 &\longmapsto& r_2,
\end{array}\]
mapping one slope to the other.
\end{enumerate}
\end{thm}

\begin{rem}[Oriented exotic cosmetic surgeries]\label{rem:orientedcosmetic}
(1) The statement holds in the oriented and unoriented case.\\
(2) One can take the manifold $M'$ to be homeomorphic to $M$, this explains the name cosmetic surgery.
\end{rem}

\begin{proof} [Proof of Theorem~\ref{thm:cosmetic}] \hfill\\
The proof of $(1)\Rightarrow(2)$ works exactly as in Example~\ref{ex:countergeneral}. The implication $(2)\Rightarrow(1)$ is similar to the proof of Theorem~\ref{thm:gordon-luecke1}.
\end{proof}

So the study of the knot complement problem is equivalent to the study of exotic cosmetic surgeries. In the topological setting, some is known, but much remains open. Beside the discussed knot complement theorem for knots in $S^3$, Gabai showed in~\cite{Ga} that knots in $S^1\times S^2$ (or more generally in a connected sum of arbitrary $T^2$- or $S^2$-bundles over $S^1$) are determined by their complements. 

But in general manifolds, this will not hold. Building up on work by Mathieu~\cite{Ma}, Rong classified in~\cite{Ro} all knots in $3$-manifolds with Seifert fibered complements, that are not determined by their complements. These knots are given by the spines of the solid tori in the standard Heegaard splitting of some special lens spaces $L(p,q)$ as described in Example~\ref{ex:countergeneral} or as exceptional fibers of index $2$ in special Seifert fibered manifolds. But all these homeomorphisms sending one of the exceptional fibers to another one have to be orientation-reversing. 

Later Matignon~\cite{M} proved that all non-hyperbolic knots in atoroidal irreducible Seifert fibered $3$-manifolds are determined by their complements (except the cores of the standard Heegaard splittings in Lens spaces). 

For hyperbolic knots, there is until now only one counterexample. In~\cite{BHW} Bleiler, Hodgson and Weeks construct two non-equivalent hyperbolic knots in $L(49,18)$ with orientation reversing homeomorphic complements. They also give very good reasons for the conjecture that all knots in hyperbolic $3$-manifolds are determined by their oriented complements. Recently Ichihara, Jong and Masai~\cite{IJ} found examples of knots in hyperbolic manifolds with orientation-reversing homeomorphic complements.

Altogether this leads to the still open oriented knot complement conjecture in general manifolds:

\begin{con}[Oriented knot complement conjecture]
If $K_1$ and $K_2$ are knots in a closed oriented $3$-manifold $M$ with orientation-pre\-ser\-ving homeomorphic complements (not homeomorphic to $S^1\times D^2$), then the knots are orientation-preserving equivalent.
\end{con}

With Theorem~\ref{thm:cosmetic} and Remark~\ref{rem:orientedcosmetic} this is equivalent to the cosmetic surgery conjecture formulated in~\cite{BHW}:

\begin{con}[Oriented cosmetic surgery conjecture]
Exotic cosmetic surgeries (not resulting in a Lens space) are never orientation-preserving (or \textbf{truly}) cosmetic.
\end{con}

Many of the mentioned results rely on the classification of all Dehn surgeries in a solid torus resulting again in a solid torus by Berge~\cite{B} and Gabai~\cite{Ga2}. But in the last years also Heegaard--Floer homology turned out to be a very useful tool to study such questions. For Example, Wang~\cite{Wan} showed that there are no exotic Dehn surgeries along Seifert genus-$1$ knots in $S^3$. The same holds for non-trivial algebraic knots in $S^3$ by~\cite{Ra1}. Finally, in~\cite{Gai} and~\cite{Ra2} it is shown that knots in $L$-space homology spheres are determined by their complements. In particular, knots in the Poincar\'{e} sphere are determined up to orientation-preserving equivalence by the oriented homeomorphism types of their complements.


	\section{The Legendrian knot complement problem in general manifolds}	
	\label{section:other}

As far as we know, nothing is known about this in the contact setting. The question is which results from the topological setting generalize to the contact setting and where are the differences. In the foregoing section, we presented examples of non-equivalent knots in Lens spaces with homeomorphic exteriors. The first interesting question is if one can generalize these examples to the contact setting. For that we start with two standard neighborhoods of Legendrian knots and glue them together along their boundaries to obtain a contact lens space. Then the exteriors of these Legendrian knots in the lens space are contactomorphic and we will check if the Legendrian knots are equivalent.

\begin{ex}  [Gluing standard neighborhoods of Legendrian knots]
Consider a Legendrian knot $K$ with $\operatorname{tb}(K)=n$. A standard neighborhood of $K$ is given by
\begin{equation*}
\big(S^1\times D^2,\ker(\cos n\theta \,dx-\sin n\theta\,dy)\big).
\end{equation*}
The surface longitude is given by $\lambda=S^1\times \{p\}$ and the contact longitude by $\lambda+n\mu$. Take two copies $(V_1,\xi_1)$ and $(V_2,\xi_2)$ of this standard neighborhood and glue them together along their boundaries to obtain the lens space $L(p,-q)$ as follows:
\\
\hfill\\
\begin{xy}
(20,10)*+{L(p,-q)}="a";(30,10)*+{=}="a1"; (40,10)*+{V_1}="c"; (60,10)*+{+}="d"; (80,10)*+{V_2}="e";(95,10)*+{\big/_\sim}="e1";%
(40,5)*+{\mu_1}="f"; (80,5)*+{q\mu_2+p\lambda_2,}="g";%
(40,0)*+{\lambda_1}="h"; (80,0)*+{r\mu_2+s\lambda_2,}="i";%
{\ar@{|->} "f";"g"};
{\ar@{|->} "h";"i"};
\end{xy}\\
\hfill\\
where $qs-pr=-1$. It is a standard fact that the contact structures on the solid tori fit together to a contact structure in the new lens space $L(p,-q)$ if the gluing map sends the contact longitude of $V_1$ to the contact longitude of $V_2$. (The contact structure will be coorientable because the gluing map is chosen to be orientation reversing.) This leads to the conditions $r=n_2-n_1q$ and $s=1-n_1p$. Putting this together it follows that $q=n_2p-1$. 

In particular, it follows that if a contact lens space is obtained by gluing together two standard neighborhoods of Legendrian knots, then the lens space is of the form $L(p,1-n_2p)=L(p,1)$. We have constructed two Legendrian knots $K_1$ and $K_2$ in a contact $L(p,1)$ with contactomorphic exteriors. However, we have seen in Example~\ref{ex:countergeneral} that in this case the knots $K_1$ and $K_2$ are topologically equivalent by a diffeomorphism interchanging the two Heegaard tori $V_1$ and $V_2$ and in fact this diffeomorphism preserves also the contact structure on $L(p,1)$. It follows that the Legendrian knots $K_1$ and $K_2$ in the contact $L(p,1)$ are equivalent.
\end{ex}

In conclusion, we do \textbf{not} get obvious counterexamples to the Legendrian knot complement problem in general contact manifolds. (Recall, that in all other known topological examples of non-equivalent knots with homeomorphic exteriors the orientation on the exteriors is reversed and thus cannot work in contact geometry.)

\begin{prob}[The Legendrian knot exterior problem in general manifolds]
Is a Legendrian knot $K$ in a general contact $3$-manifold $(M,\xi)$ determined by the contactomorphism type of its exterior $(M\setminus\mathring{\nu K},\xi)$?
\end{prob}

A first step to study the Legendrian knot exterior problem in general contact manifolds would be a generalization of Theorem~\ref{thm:cosmetic}. The generalization of the implication $(2)\Rightarrow(1)$ one can prove similarly to the proof of Theorem~\ref{thm:contact1}.

\begin{lem}[Criterion for the Legendrian knot exterior problem]
Let $K$ be a Legendrian knot in a contact manifold $(M,\xi)$, such that there is no non-trivial contact Dehn surgery along $K$ resulting again in $(M,\xi)$. Then the equivalence type of $K$ is determined by the contactomorphism type of its exterior.
\end{lem}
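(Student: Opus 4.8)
The plan is to run the argument of Theorem~\ref{thm:contact1}, viewing a contact Dehn surgery as cutting out the knot exterior, transporting it by a contactomorphism, and gluing it back. So let $K'$ be any Legendrian knot in $(M,\xi)$ whose exterior is contactomorphic to that of $K$, and fix a contactomorphism
\[
h\colon\big(M\setminus\mathring{\nu K'},\xi\big)\longrightarrow\big(M\setminus\mathring{\nu K},\xi\big)
\]
(if the given contactomorphism points the other way, replace it by its inverse). The goal is to upgrade $h$ to a contactomorphism of $(M,\xi)$ sending $K'$ to $K$, which gives $K'\sim K$ and hence the conclusion.

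First I would write $(M,\xi)$ as the trivial contact Dehn surgery along $K'$: glue the standard neighbourhood $(S^1\times D^2,\xi_0)$ — with $\xi_0$ the unique tight contact structure on $S^1\times D^2$ with the convex boundary realizing it as a standard neighbourhood, cf.\ Section~\ref{section:contactproof} — back onto $(M\setminus\mathring{\nu K'},\xi)$ along a gluing map $\varphi'\colon\mu_0\mapsto\mu'$, where $\mu'$ is the meridian of $K'$. Composing with $h$ yields a gluing of $\partial(S^1\times D^2,\xi_0)$ to $\partial(M\setminus\mathring{\nu K},\xi)$ taking $\mu_0$ to $h(\mu')=:r$; since $h$ is a contactomorphism of convex boundaries it carries the dividing set of $\partial\nu K'$ to that of $\partial\nu K$, so the contact structures still match and we obtain a contact Dehn surgery $(M_K(r),\xi_K(r))$ along $K$, with $\xi_K(r)$ tight on the new solid torus since it is $\xi_0$. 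Exactly as in Section~\ref{section:contactproof}, the identity on the $S^1\times D^2$-factor and $h$ on the exterior factor agree along the common boundary convex surface and hence glue to a contactomorphism $(M,\xi)\xrightarrow{\ \cong\ }(M_K(r),\xi_K(r))$.

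Now the hypothesis applies: the result $(M_K(r),\xi_K(r))$ of a contact Dehn surgery along $K$ with slope $r$ is contactomorphic to $(M,\xi)$, so by assumption this surgery must be the trivial one, i.e.\ $r=\mu$, the meridian of $K$. Thus $h(\mu')=\mu$, so the restriction $h|_{\partial\nu K'}\colon\partial\nu K'\to\partial\nu K$ is a contactomorphism of convex tori taking meridian to meridian and dividing set to dividing set. Since $\nu K'$ and $\nu K$ both carry the unique tight contact structure on $S^1\times D^2$ with this convex boundary (Honda~\cite{H}), this boundary contactomorphism extends over the two solid tori; gluing the extension to $h$ produces a contactomorphism $\widetilde h$ of $(M,\xi)$ with $\widetilde h(K')=K$, so $K'\sim K$. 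Running the same argument with an oriented longitude fixed throughout gives the oriented refinement.

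I expect the genuinely delicate points to be the two places where contact (not merely topological) information is used: that the transported gluing really defines a contact Dehn surgery in the sense of Section~\ref{section:contactsurgery}, and that $h|_{\partial\nu K'}$ extends over the standard neighbourhoods once the meridians are matched. Both rest on the facts that a contactomorphism of convex boundary tori respects the dividing sets and that the tight contact structure on the model solid torus with prescribed convex boundary is unique; these should be spelled out, but the rest is a verbatim transcription of the proof of Theorem~\ref{thm:contact1}.
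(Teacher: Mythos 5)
Your argument is correct and is essentially the paper's own: the paper proves this criterion only by remarking that it follows "similarly to the proof of Theorem~\ref{thm:contact1}", and your write-up is exactly that transcription — realize $(M,\xi)$ as the trivial contact surgery along $K'$, transport the gluing by $h$ to get a contact surgery along $K$ with slope $h(\mu')$ contactomorphic to $(M,\xi)$, invoke the hypothesis to force the slope to be the meridian, and conclude in the trivial-surgery case that the glued contactomorphism carries $K'$ to $K$. Your closing extension-over-the-solid-tori step is just a slightly more explicit rendering of the paper's assertion that in the trivial case the map sends spine to spine, so no genuinely different route is involved.
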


But the other implication does not generalize. The problem is again that the new glued-in solid torus is in general not a standard neighborhood of a Legendrian knot as explained in Example~\ref{ex:counterlink}.

So it is not directly clear if exotic cosmetic contact surgeries leads to counterexamples to the Legendrian knot exterior problem. But their existence is interesting in its own. The following example was obtained earlier by Geiges and Onaran (see also~\cite{GO}).

\begin{ex}  [Exotic contact surgeries]
Consider the two different contact Dehn surgeries along the Legendrian unknot $U$ with $\operatorname{tb}=-1$ in $(S^3,\xi_{st})$ with contact surgery coefficients $r_1=-3/2$ and $r_2=-2/3$ (see Figure~\ref{fig:lensspace}). By expressing the surgery coefficients with respect to the surface longitude $\lambda_s$ given by the meridian of the complementary solid torus one sees that the resulting manifolds are topologically the manifolds from Example~\ref{ex:countergeneral}, so the homeomorphic lens spaces $L(5,2)$ and $L(5,3)$.
\begin{figure}[htbp] 
\centering
\def\svgwidth{0.99\columnwidth}
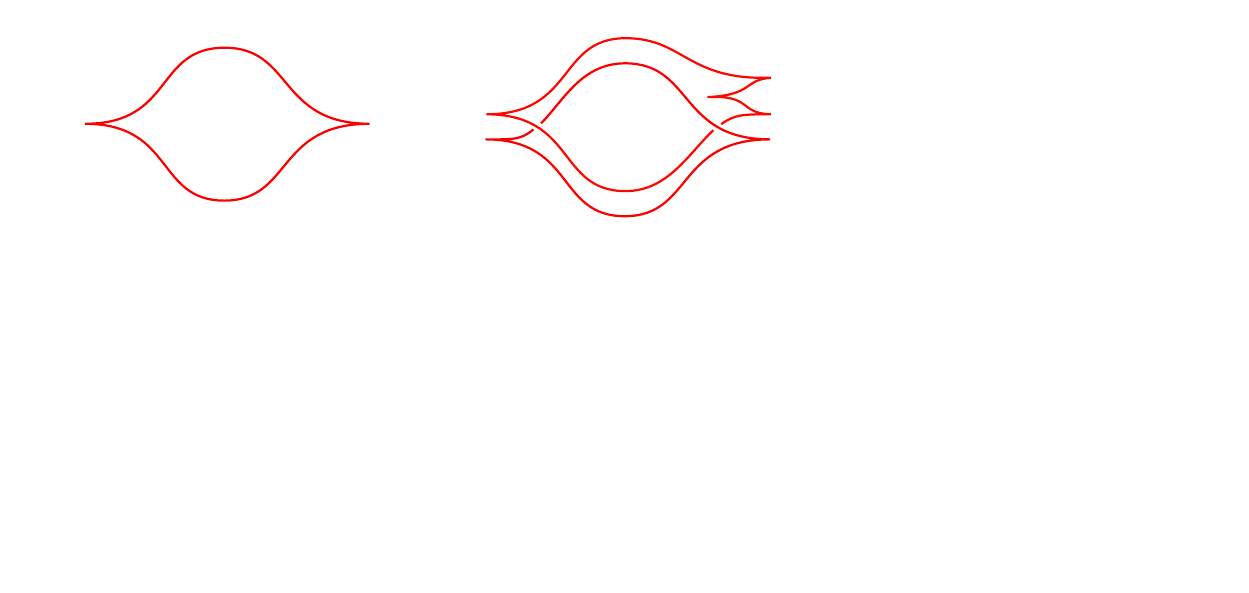
\caption{Two surgery diagrams of the same tight contact structure $\xi$ on $L(5,2)$}
\label{fig:lensspace}
\end{figure} 

The next thing we want to show is that the two contact surgeries are unique and represent the same contact manifold. Therefore one uses the same approach as in Example~\ref{ex:unique}. The same argument as in Example~\ref{ex:unique} shows that the two surgery diagrams both represent a unique contact structure $\xi$ on $L(5,2)$. The from the algorithm~\cite[Section 1]{DGS} resulting $(\pm1)$-contact surgery diagrams are shown in Figure~\ref{fig:lensspace} in the middle (for both contact surgeries is only one of the two possible stabilizations drawn). Both surgery diagrams contain only $(-1)$-contact surgeries, so the resulting contact structures have to be tight. But the classification of tight contact structures on lens spaces~\cite[Theorem~2.1]{H} says that on $L(5,2)$ there are two non-contactomorphic tight contact structures, so in this case, this is not enough to conclude that these surgery diagrams represent the same contact manifold.

To show that the contact structures of the two different surgeries are really contactomorphic one changes the contact Dehn surgery diagrams into compatible open book decompositions as in the proof of Lemma~\ref{lem:rolfsentwist} (for details see for example~\cite{O}). The resulting open books are shown in Figure~\ref{fig:lensspace} on the right. All colored curves represent right-handed Dehn twists, the blue ones correspond to the monodromy of the open book decomposition of $(S^3,\xi_{st})$ and the red and orange curves represent the Dehn twists corresponding to the same colored Legendrian links. All Dehn twists together represent the monodromy of $(L(5,2),\xi)$. By interchanging the holes, these two open books are the same, and therefore represent the same contact manifolds. 

These cosmetic contact surgeries are exotic because the corresponding topological cosmetic surgeries are exotic (i.e. there is no homeomorphism of the exterior of the unknot that maps one slope to the other).

With exactly the same methods one can get many other examples of this kind. For example, by contact Dehn surgery along $U$ with contact surgery coefficients $r_1=-2/5$ and $r_2=-3/4$ one gets contactomorphic contact structures on the lens spaces $L(7,5)$ and $L(7,3)$.
\end{ex}

An easy consequence of Theorem~\ref{thm:contact2} is that the only possible candidates for Legendrian knots in $(S^3,\xi_{st})$ admitting a (non-trivial) contact surgery resulting again in $(S^3,\xi_{st})$ are the Legendrian unknots of type $U_n$.

That cosmetic contact surgeries along such knots really exist was shown in Example~\ref{ex:surgeryUnknot}. It is also possible to show that every Legendrian unknot of type $U_n$ admits (infinitely many) cosmetic contact surgeries resulting again in $(S^3,\xi_{st})$. By computing explicitly the $d_3$-invariants (see~\cite{DGS,DK}) of all other contact surgeries one can classify all rational contact Dehn surgeries along a single Legendrian unknot resulting in a $S^3$ with an arbitrary contact structure~\cite[Section~5.4]{Ke17a}.

\begin{prob}[Cosmetic contact surgeries]
Is it possible to classify cosmetic contact surgeries in other contact manifolds? Can one find examples of exotic cosmetic contact surgeries not resulting in Lens spaces?
\end{prob}

Moreover, in Section \ref{counterexample} we showed that all topological examples of links not determined by their complements, where one does a composition of $(-1)$-Rolfsen twist along unknot components, works also for the contact case. But in the topological category there are also examples that do not arise in this way. Teragaito~\cite{Te} and Ichihara~\cite{I} construct, building up on unpublished work of Berge, links in $S^3$ with no unknot components, such that non-trivial Dehn surgery along that link leads again to $S^3$. Similar to the arguments in Theorem~\ref{thm:cosmetic} this leads to links not determined by their complements. Gordon proves in~\cite{Gor} that for every such link (not determined by its complement but without unknot components) there exist only finitely many other links with homeomorphic complements. Finally in~\cite{MOS} the reverse question is considered. They study links that \textbf{are} determined by their complements.

\begin{prob}[The Legendrian link exterior problem]
Which of these statements hold also for Legendrian links in contact manifolds?
\end{prob}



\end{document}